\documentclass{amsart}
\pdfoutput=1

\usepackage{amssymb}
\usepackage{enumitem}
\usepackage{microtype}
\usepackage{url}
\usepackage{graphicx}
\usepackage{tikz}
\usepackage{tikzpagenodes}

\makeatletter
\@namedef{subjclassname@2020}{%
  \textup{2020} Mathematics Subject Classification}
\makeatother

\newtheorem{theorem}{Theorem}[section]
\newtheorem{corollary}[theorem]{Corollary}
\newtheorem{lemma}[theorem]{Lemma}
\newtheorem{proposition}[theorem]{Proposition}

\theoremstyle{definition}
\newtheorem{definition}[theorem]{Definition}

\newtheorem*{xremark}{Remark}

\newtheorem*{xdefinition}{Definition}
\newtheorem*{xtheorem}{Theorem}

\numberwithin{equation}{section}



\DeclareMathOperator{\ord}{ord}
\DeclareMathOperator{\cf}{cf}
\DeclareMathOperator{\Lim}{Lim}

\newcommand{\nullset}{\mbox{\rm \O}}
\renewcommand{\emptyset}{\nullset}
\renewcommand{\setminus}{\!\smallsetminus\!}
\newcommand{\seq}[1]{\langle#1\rangle}
\newcommand{\NN}{\mathbb{N}}


\begin{document}

\title{Compactness and Symmetric Well Orders}
\author[A. Dasgupta]{Abhijit Dasgupta}
\address{University of Detroit Mercy\\
Department of Mathematics\\
Detroit, MI 48221, USA}
\email{dasgupab@udmercy.edu}

\date{2024 Mar 7 (AM for the published
online version, with minor corrections)}

\begin{abstract}
We introduce and investigate a topological form of
St\"ackel's 1907 characterization of finite sets,
with the goal of obtaining an interesting notion that characterizes
usual compactness (or a close variant of it).
Define a \(T_2\) topological space \((X, \tau)\) to be \emph{St\"ackel-compact}
if there is some linear ordering \(\prec\) on \(X\)
such that every non-empty \(\tau\)-closed set contains
a \(\prec\)-least and a \(\prec\)-greatest element.
We find that compact spaces are St\"ackel-compact but not conversely,
and St\"ackel-compact spaces are countably compact.
The equivalence of St\"ackel-compactness with countable compactness
remains open, but our main result is that
this equivalence holds in scattered spaces of
Cantor-Bendixson rank \(< \omega_2\) under ZFC.
Under V=L, the equivalence holds in all scattered spaces.
\end{abstract}

\subjclass[2020]{Primary 54D30; Secondary 03E35, 54F05, 54G12}
\keywords{Compactness, well-order, Cantor-Bendixson}
\thanks{This is the accepted manuscript with minor typo corrections
as in the ``Online First'' version
published on 17 January 2024
in the Bulletin of the Polish Academy of Sciences, Mathematics,
DOI: 10.4064/ba230424-28-12 (journal permits posting the
Online First version on archives)}

\maketitle

\begin{tikzpicture}[remember picture,overlay]
 \node[anchor=north west, outer xsep=4em, outer ysep=5ex]
    at (current page.north west)%
    {\fbox{\parbox{3.9in}{Accepted manuscript for
    article DOI: 10.4064/ba230424-28-12\\
    Journal: Bulletin Polish Acad. Sci. Math.\\
    Published online:  17 January 2024\\
    \copyright{}\,Instytut Matematyczny PAN, 2024\\[0.3\baselineskip]
    Journal permits posting the ``Online First'' version on archives.}}};
\end{tikzpicture}

\section{Introduction and Summary of Results.}

A familiar phenomenon in point-set topology is that
a purely combinatorial set-theoretic condition that characterizes
finiteness of sets will often have a corresponding analogue in topological
spaces which characterizes compactness, or at least an interesting
variant of compactness (Tao~\cite{Tao} illustrates this with
example properties;
see also \cite{Engelking}).

The purpose of this article to is to introduce and investigate the topological
analogue of a specific property due to St\"ackel~\cite{Stackel}
that characterizes the finiteness of a set,
namely the existence of some ordering on the set in which
every non-empty subset has
a smallest and a greatest element (a ``symmetric well-order'').
Section~\ref{sec:formulation} defines
the corresponding topological property which we call
\emph{St\"ackel-compactness,}
namely the existence of some ordering on the space such that every non-empty
\emph{closed} subset has a smallest and a greatest element.
Our problem
is to study
how close this notion is to ordinary compactness.

In Section~\ref{sec:propstack}, we establish some basic properties of
St\"ackel-compactness:
We observe that 
every compact Hausdorff space is St\"ackel-compact, but not conversely
(we show that the space \(\omega_1\) is St\"ackel-compact).
Also, every St\"ackel-compact space is countably compact.
Consequently, in metric spaces
St\"ackel-compactness coincides with usual compactness.
Thus the notion has similarities
with other variants of compactness such as
pseudocompactness and sequential compactness
(although distinct from them).  St\"ackel-compactness
appears to be quite close to countable compactness,
but we do not know if the two notions are distinct or if they coincide.

In Section~\ref{sec:ccompact}, we obtain our main result:
\emph{In scattered Hausdorff spaces of
Cantor-Bendixson-rank less than \(\omega_2\),
St\"ackel-compactness coincides with countable compactness}
(Theorem~\ref{theo:main}).

In Section~\ref{sec:bzfc} we go beyond ZFC and
combine our results with those from~\cite{mathoverflow}
to remove the restriction on Cantor-Bendixson rank:
\emph{Under V=L,
all scattered countably compact \(T_2\) spaces are St\"ackel-compact,
so this conclusion is relatively consistent with ZFC}
(Theorem~\ref{theo:veql}).
The referee has pointed out that
there is a class of
scattered spaces, namely Mr\'owka spaces, which provides examples
distinguishing different notions of compactness,
so Theorem~\ref{theo:veql}
tightens the problem of distinguishing
St\"ackel-compact\-ness from countable compactness under ZFC,
and raises the possibility that it (Theorem~\ref{theo:veql})
may hold in ZFC without additional set theoretic axioms.

Section~\ref{sec:open} uses a suggestion from the referee
to show that Nov\'ak's basic method~\cite{Novak} produces
St\"ackel-compact spaces (Proposition~\ref{prop:novakstackel}),
and St\"ackel-compactness is not
a productive property (Corollary~\ref{coro:negprod}).

Finally, in Section~\ref{sec:finish}
we finish with
open questions,
credits,
and history.

\section{Symmetric Well Orders and St\"ackel-compactness.}%
\label{sec:formulation}

\begin{definition}
An order \(\prec\) on a set \(X\) is called a
\emph{symmetric well-order} if every non-empty subset of \(X\)
contains both a least and a greatest element
(or equivalently, if both \(\prec\) and the reverse order
\(\succ\) well order \(X\)).
\end{definition}
\begin{proposition}[St\"ackel~\cite{Stackel}]\label{prop:stackfin}
A set \(X\) is finite if and only if a symmetric well-order can be
defined on \(X\).
\end{proposition}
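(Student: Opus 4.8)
The plan is to prove both directions separately, with the forward direction being routine and the reverse being the substantive one.
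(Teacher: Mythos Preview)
Your proposal is only a plan, not a proof: you have not supplied any argument for either direction, and in particular you have not said what idea will drive the ``substantive'' reverse implication. The paper itself does not give a proof either---it simply remarks that the result is easily proved in ZF without AC and cites Suppes---so there is nothing in the paper to compare your approach against.

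That said, your assessment of where the content lies is accurate. The forward direction is indeed routine: a finite set can be enumerated as \(x_1, \dots, x_n\), and the resulting order is trivially a symmetric well-order. For the reverse direction you still need to supply an argument. One standard route: if \(\prec\) is a symmetric well-order on \(X\), then \(\prec\) is a well-order, so \((X,\prec)\) is order-isomorphic to some ordinal \(\alpha\); if \(X\) were infinite then \(\alpha \geq \omega\), and the copy of \(\omega\) inside \(X\) would be a non-empty subset with no \(\prec\)-greatest element, contradicting the symmetric well-order hypothesis. Until you write down something along these lines, the proposal remains a statement of intent rather than a proof.
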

(This is easily proved in ZF without AC; see \cite{Suppes}, p.108 and p.149.)

\smallskip

From this purely combinatorial
characterization of finite
sets, we now go to its topological analogue: 
We define a topological version of
symmetric well-ordering, and
formulate
our main notion,
\emph{St\"ackel-compactness.} 
\begin{definition}
Let \(X\) be a Hausdorff topological space.  
\begin{enumerate}
\item
An order \(\prec\) on \(X\) is called a \emph{symmetric topological
well-order}~if each non-empty closed subset of \(X\) has
a least and a greatest element.
\item
\(X\) is called \emph{St\"ackel-compact} if there exists
some ordering \(\prec\) on \(X\) which is a symmetric topological well order.
\end{enumerate}
\end{definition}
\emph{Caveat:
In these definitions, the order \(\prec\) on \(X\) 
is not assumed to be related to the
topology on \(X\) in any other way.  In general, the order topology
given by the order \(\prec\) will be
unrelated to
the original topology of \(X\).}

\medskip

The unit interval \([0,1]\) is St\"ackel-compact, as
the usual ordering is itself a symmetric topological well order;
here the topology is same as the order topology.
More generally, a linearly ordered space (a linearly
ordered set under order topology)
is compact if and only if every non-empty closed set has a least
and a greatest element, and so
all compact linearly ordered spaces are St\"ackel-compact.
Conversely, we can ask if the points of a given topological space \(X\)
can be ordered in a way that every non-empty closed set has a least
and a greatest element.   So we can view St\"ackel-compactness
as a natural generalization of compact linearly ordered spaces
(rather than as a topological analogue of finiteness),
which gives an alternative second motivation for our notion.
I thank the referee for this observation.

More examples of St\"ackel-compact spaces will appear later.

In the remaining sections, we focus on our main
problem:

\medskip
\noindent
\textbf{Problem.\;}
How close is St\"ackel-compactness to compactness?
\medskip

Throughout, we restrict our attention to Hausdorff topological
spaces, and assume the Axiom of Choice, i.e., work in ZFC.

(Notations and 
results used
can be found in standard references in point-set topology and set theory,
such as
\cite{Engelking,Jech}.)

\section{Basic Properties of St\"ackel-compact spaces.}%
\label{sec:propstack}
St\"ackel-compactness shares
these properties of ordinary compactness:
\begin{proposition}
A closed subspace of a St\"ackel-compact space is St\"ackel-compact.
If a topology is St\"ackel-compact, then so is any weaker \(T_2\)
topology.
\end{proposition}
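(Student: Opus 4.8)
The plan is to handle each assertion by reusing the very same order that witnesses St\"ackel-compactness of the ambient space, and in both cases to reduce to the defining property of that order by checking that the relevant closed sets remain closed in the original space.

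For the first assertion, suppose \((X,\tau)\) is St\"ackel-compact, witnessed by an order \(\prec\) on \(X\), and let \(Y\subseteq X\) be \(\tau\)-closed. First I would take the restriction of \(\prec\) to \(Y\) as the candidate order: since \(\prec\) linearly orders \(X\), its restriction linearly orders \(Y\), and since a closed subspace of a Hausdorff space is Hausdorff, \(Y\) is \(T_2\). The key step is the observation that the closed subsets of \(Y\) in the subspace topology are exactly the sets \(Y\cap C\) with \(C\) \(\tau\)-closed, and that because \(Y\) is itself \(\tau\)-closed, any such intersection is \(\tau\)-closed in \(X\). Hence a non-empty closed subset \(F\) of \(Y\) is a non-empty \(\tau\)-closed subset of \(X\), so it has a \(\prec\)-least and a \(\prec\)-greatest element; these lie in \(F\subseteq Y\) and so are the least and greatest elements of \(F\) under the restricted order. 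This shows that \(\prec\) restricted to \(Y\) is a symmetric topological well-order on \(Y\).

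For the second assertion, suppose \((X,\tau)\) is St\"ackel-compact via \(\prec\) and let \(\tau'\) be a weaker \(T_2\) topology, i.e.\ \(\tau'\subseteq\tau\). I would keep the same order \(\prec\). Since \(\tau'\subseteq\tau\), every \(\tau'\)-closed set is \(\tau\)-closed, so any non-empty \(\tau'\)-closed set inherits a \(\prec\)-least and a \(\prec\)-greatest element from the defining property of \(\prec\) for \(\tau\). As \(\tau'\) is assumed \(T_2\), this exhibits \((X,\tau')\) as St\"ackel-compact with the same witnessing order.

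There is no real obstacle here: both arguments are immediate once one notes that the sets that must now have extrema (closed in the subspace, respectively closed in the weaker topology) are still closed in the original space, so the single order supplied by St\"ackel-compactness continues to do its job. The only points requiring a word of care are bookkeeping ones — that the restricted order is still a linear order, and that the Hausdorff hypothesis is preserved (automatic for closed subspaces, and explicitly assumed in the weaker-topology statement).
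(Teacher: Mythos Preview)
Your argument is correct and is precisely the unpacking of what the paper records as ``Immediate from the definition'': in both cases you keep the same witnessing order and use that the closed sets in question are already closed for the original topology. There is nothing to add.
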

\begin{proof}
Immediate from the definition.
\end{proof}
\begin{proposition}\label{prop:compactimpliesSC}
Every compact Hausdorff space \(X\) is St\"ackel-compact.
\end{proposition}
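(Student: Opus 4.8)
The plan is to realize $X$ inside a cube and transport the lexicographic order back to $X$. Since $X$ is compact Hausdorff it is normal, hence completely regular, so the evaluation map into $[0,1]^{\kappa}$ determined by a point-separating family of continuous $[0,1]$-valued functions is a homeomorphism onto its image; here $\kappa$ is a fixed ordinal indexing that family. Identifying $X$ with this image $e[X]$, I record the two facts I will use: in the compact space $X$ the closed subsets are exactly the compact subsets, and if $C \subseteq X$ is closed then $e[C]$ is a non-empty compact (hence closed) subset of $[0,1]^{\kappa}$.

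Index the coordinates of $[0,1]^{\kappa}$ by the ordinal $\kappa$ and let $\prec$ be the lexicographic order: $x \prec y$ exactly when $x_{\beta} < y_{\beta}$ at the least $\beta$ where $x_{\beta} \neq y_{\beta}$. Since $\kappa$ is well ordered this is a genuine linear order, and pulling it back along the injection $e$ gives a linear order on $X$. I claim this order is a symmetric topological well-order, i.e. every non-empty compact $K = e[C] \subseteq [0,1]^{\kappa}$ has a $\prec$-least and a $\prec$-greatest element, which then correspond to a $\prec$-least and $\prec$-greatest element of the closed set $C$.

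To build the lexicographic minimum of $K$ I use transfinite recursion on the coordinates. Suppose values $m_{\beta}$ have been chosen for all $\beta < \alpha$, and set $K_{\alpha} = \{x \in K : x_{\beta} = m_{\beta} \text{ for all } \beta < \alpha\}$, a closed and hence compact subset of $K$. Assuming $K_{\alpha} \neq \emptyset$, the projection $\pi_{\alpha}[K_{\alpha}]$ is a compact, hence closed, subset of $[0,1]$, so it has a least element, which I take to be $m_{\alpha}$. At each limit stage, and at the very end, $K_{\alpha} = \bigcap_{\beta < \alpha} K_{\beta}$ is a decreasing intersection of non-empty compact sets and is therefore non-empty by the finite intersection property; this keeps the recursion alive through all $\beta < \kappa$ and shows the terminal intersection $\bigcap_{\beta < \kappa} K_{\beta} = \{m\}$ is non-empty, so $m = (m_{\beta})_{\beta < \kappa} \in K$. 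Finally $m$ is the lexicographic least element: given any other $y \in K$, at the least coordinate $\beta$ with $y_{\beta} \neq m_{\beta}$ one has $y \in K_{\beta}$, whence $m_{\beta} \le y_{\beta}$ by the choice of $m_{\beta}$, so $m_{\beta} < y_{\beta}$ and $m \prec y$. Replacing ``least'' by ``greatest'' throughout the construction produces the lexicographic greatest element.

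The only nontrivial point in the whole argument is the non-emptiness at limit stages and at the terminal stage, and that is exactly where compactness of $X$ (via the finite intersection property) is essential; the rest is routine bookkeeping about lexicographic comparisons. Since the restricted lexicographic order thus has a least and a greatest element on every non-empty closed subset of $X$, it is a symmetric topological well-order, and $X$ is St\"ackel-compact.
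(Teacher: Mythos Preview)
Your proof is correct and follows essentially the same route as the paper: embed $X$ as a closed subspace of a cube $[0,1]^{\kappa}$ and use the lexicographic order. The only difference is that the paper merely asserts the lexicographic order is a symmetric topological well order, while you supply the transfinite-recursion verification of that fact.
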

\begin{proof}
\(X\) is homeomorphic to a closed subspace of \([0,1]^\mu\)
for some ordinal \(\mu\).
The lexicographic order on \([0,1]^\mu\)
is readily verified to be a symmetric topological well order.
Hence \([0,1]^\mu\), and so \(X\), is St\"ackel-compact.
\end{proof}
\begin{proposition}
Every St\"ackel-compact
space \(X\) is countably compact.
\end{proposition}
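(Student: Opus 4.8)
The plan is to use the closed-set form of countable compactness: a space is countably compact if and only if every decreasing sequence \(F_0 \supseteq F_1 \supseteq \cdots\) of non-empty closed sets has non-empty intersection (equivalently, every countable family of closed sets with the finite intersection property meets, which is the closed-set dual of the countable-open-cover definition). So I fix a symmetric topological well-order \(\prec\) witnessing that \(X\) is St\"ackel-compact, take such a decreasing sequence \(F_0 \supseteq F_1 \supseteq \cdots\) of non-empty closed sets, and aim to produce a point of \(\bigcap_n F_n\).

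First I would extract, for each \(n \in \omega\), the \(\prec\)-least element \(a_n := \min_\prec F_n\), which exists precisely because \(\prec\) is a symmetric topological well-order and \(F_n\) is non-empty and closed. Since \(F_{n+1} \subseteq F_n\), the least element can only move upward, so \(a_0 \preceq a_1 \preceq \cdots\) is \(\prec\)-nondecreasing. The set \(A := \{a_n : n \in \omega\}\) then has non-empty closed closure \(\overline{A}\), and applying the symmetric well-order once more I set \(c := \max_\prec \overline{A}\).

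The crux is to show \(c \in F_n\) for every \(n\), which gives \(c \in \bigcap_n F_n\) and finishes the argument. The key observation is that deleting the finite initial block \(\{a_0,\dots,a_{n-1}\}\) affects neither \(\overline{A}\)'s maximum nor membership of that maximum: writing \(\overline{A} = \{a_0,\dots,a_{n-1}\} \cup \overline{\{a_k : k \geq n\}}\), every point of the finite part is \(\preceq a_n \preceq c\) while \(a_n\) itself lies in the tail \(\{a_k : k \geq n\}\), so \(c\) must lie in \(\overline{\{a_k : k \geq n\}}\). Since \(a_k \in F_k \subseteq F_n\) for all \(k \geq n\), the closed set \(F_n\) contains \(\{a_k : k \geq n\}\) and hence its closure, whence \(c \in F_n\).

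I expect the last step to be the main obstacle: ensuring that the single point \(c\), chosen once and for all as \(\max_\prec \overline{A}\), simultaneously lands inside every \(F_n\). Monotonicity of \((a_n)\) is exactly what makes this work, forcing the maximum of \(\overline{A}\) to be ``carried'' by the arbitrarily late tails rather than by any fixed early term. The degenerate case in which \(A\) is finite is automatically covered, since then \((a_n)\) is eventually constant equal to \(c\), so \(c \in F_n\) for large \(n\) and hence for all \(n\). The only remaining care is the routine verification that the decreasing-closed-sets condition is genuinely equivalent to countable compactness.
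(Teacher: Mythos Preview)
Your argument is correct. The decreasing-closed-sets characterization of countable compactness is standard, the monotonicity \(a_0 \preceq a_1 \preceq \cdots\) follows immediately from \(F_{n+1}\subseteq F_n\), and your tail argument showing \(c=\max_\prec\overline{A}\in\overline{\{a_k:k\ge n\}}\subseteq F_n\) is sound (in the borderline case \(c=a_i\) with \(i<n\) one gets \(c=a_i\preceq a_n\preceq c\), so \(c=a_n\) and the conclusion still holds).

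The paper takes a different and somewhat shorter route. It uses the limit-point characterization (in Hausdorff spaces, countable compactness is equivalent to every infinite set having a limit point) together with St\"ackel's original finiteness criterion: any infinite \(A\subseteq X\) contains a non-empty subset \(B\) lacking a \(\prec\)-least or a \(\prec\)-greatest element; by St\"ackel-compactness such a \(B\) cannot be closed, so \(B\) (hence \(A\)) has a limit point. This is a two-line argument with a pleasant self-reference to Proposition~2.2, but it is purely existential. Your approach is longer yet more constructive: given a nested sequence of closed sets you exhibit an explicit common point, and you use both halves of the symmetric well-order (minima to define the \(a_n\), a maximum to pin down \(c\)) rather than the disjunctive ``no least or no greatest''.
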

\begin{proof}
Fix an ordering on \(X\) in which every non-empty closed set contains
both a least and a greatest element.
Since \(X\) is Hausdorff,
it suffices to show that every infinite subset of \(X\) has a limit point.
Let \(A\) be an infinite subset of \(X\).  Then
(e.g.\ by Proposition~\ref{prop:stackfin})
\(A\) contains a non-empty
subset \(B\) which either has no least element or has no greatest element.
\(B\) cannot be closed,
since \(X\) is St\"ackel-compact.
Hence \(B\) has
a limit point, and so \(A\) has a limit point.
\end{proof}
\begin{corollary}\label{coro:metricequiv}
Let \(X\) be a Hausdorff space which is either Lindel\"of or
paracompact.
Then \(X\) is St\"ackel-compact if and only if \(X\) is compact.
In particular, in metrizable spaces St\"ackel-compactness coincides with
compactness.
\end{corollary}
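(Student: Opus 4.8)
The forward implication needs no new work: it is precisely Proposition~\ref{prop:compactimpliesSC}, which establishes that every compact Hausdorff space is St\"ackel-compact. For the converse I would start from the fact just proved that every St\"ackel-compact space is countably compact, and then show that countable compactness upgrades to full compactness under each of the two hypotheses. The Lindel\"of case is immediate and requires only the two properties in succession: given an open cover of \(X\), the Lindel\"of property extracts a countable subcover, and countable compactness then extracts a finite subcover, so \(X\) is compact.

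The paracompact case is the one carrying the only real content, and the plan is to isolate the standard lemma that a locally finite family of non-empty subsets of a countably compact \(T_2\) space must be finite. Supposing such a family \(\{U_\alpha\}\) were infinite, I would choose one point from each member to form an infinite set \(A\); by countable compactness—used in the limit-point form from the preceding proof—\(A\) has a limit point \(p\). Local finiteness provides a neighborhood of \(p\) meeting only finitely many \(U_\alpha\), hence containing only finitely many points of \(A\), which contradicts \(p\) being a limit point of \(A\). The single point of care is to arrange the chosen points to be distinct; this is possible because local finiteness makes the family point-finite, so repetitions can be discarded while keeping \(A\) infinite. Granting the lemma, any open cover of a paracompact \(X\) has a locally finite open refinement, which must be finite by the lemma, and replacing each member of that finite refinement by a containing member of the original cover produces a finite subcover, so \(X\) is compact.

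For the closing ``in particular'' clause I would invoke A.~H.\ Stone's theorem that every metrizable space is paracompact, reducing it to the paracompact case already treated; alternatively one can argue directly that a countably compact metric space is sequentially compact and totally bounded, hence compact. I expect the only obstacle to be the verification of the locally-finite-is-finite lemma, and within it the bookkeeping that keeps the sampled points distinct so the limit-point argument applies; every other step is either cited from the earlier propositions or is a routine passage between an open cover, its refinement, and a finite subcover.
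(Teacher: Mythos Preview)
Your argument is correct and is exactly the route the paper intends: the corollary is stated without proof because it follows from the two propositions immediately preceding it together with the standard facts that countable compactness plus Lindel\"of (resp.\ paracompact) implies compactness, which you spell out in full. One cosmetic point: given the phrasing ``St\"ackel-compact if and only if compact,'' the direction you call the ``forward implication'' is actually the converse, but the content is unaffected.
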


At this point, we have the following implications (in Hausdorff spaces):
\[
\text{Compact } \implies \text{ St\"ackel-compact } \implies
\text{Countably compact}
\]

We next ask:  Can the first implication be reversed?
Is every St\"ackel-compact space compact?
It turns out that the answer is negative.

\begin{theorem}\label{theo:omegaone}
The space \(\omega_1\), consisting of all countable ordinals with the
order topology, is St\"ackel-compact.
Thus there are St\"ackel-compact spaces which are not compact.
\end{theorem}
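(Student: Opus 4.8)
The plan is to keep the usual ordinal order as the backbone and to correct only its single defect. Write $<$ for the natural order on $\omega_1$. Since $<$ well-orders $\omega_1$, every non-empty subset (in particular every closed set) has a $<$-least element, so least elements will never be a problem. Moreover every \emph{bounded} closed set $C\subseteq\omega_1$ already has a $<$-greatest element: if $\gamma=\sup C<\omega_1$ then either $\gamma\in C$, or $C$ is cofinal in the limit ordinal $\gamma$, making $\gamma$ a limit point of $C$ and hence, by closedness, an element of $C$; either way $\gamma=\max C$. Thus the only closed sets lacking a $<$-greatest element are the \emph{unbounded} ones, that is, the closed unbounded (club) subsets of $\omega_1$, and the whole difficulty is to repair these while preserving the good behaviour of least elements.

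To do this I would partition $\omega_1=A\sqcup B$ and define a new order $\prec$ that places all of $A$, ordered by $<$, below all of $B$, ordered by the \emph{reverse} order; formally, $x\prec y$ iff $x\in A,\ y\in B$, or $x,y\in A$ with $x<y$, or $x,y\in B$ with $y<x$. This $\prec$ is plainly a linear order. Given a non-empty closed set $C$, its $\prec$-least element is $\min_<(C\cap A)$ when $C\cap A\neq\emptyset$, and otherwise (so $C\subseteq B$) it is the $<$-\emph{greatest} element of $C$; dually, its $\prec$-greatest element is $\min_<(C\cap B)$ when $C\cap B\neq\emptyset$, and otherwise (so $C\subseteq A$) it is the $<$-greatest element of $C$. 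The two ``mixed'' cases are settled by the well-order $<$, so the construction succeeds exactly when every closed $C$ contained in $A$, and every closed $C$ contained in $B$, is bounded, for then $\max_< C$ exists by the first paragraph.

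By that boundedness criterion, ``every closed subset of $A$ is bounded'' says precisely that $A$ contains no club, equivalently that $B$ meets every club; symmetrically, ``every closed subset of $B$ is bounded'' says that $A$ meets every club. Hence $\prec$ is a symmetric topological well order precisely when both $A$ and $B$ are stationary, i.e.\ when $A$ is a stationary, co-stationary subset of $\omega_1$. The existence of such an $A$ is a standard theorem of ZFC (see e.g.\ \cite{Jech}; it follows from an Ulam-matrix/Fodor argument splitting $\omega_1$ into two disjoint stationary pieces), and I expect this to be the one genuinely set-theoretic ingredient and the main obstacle of the proof, since no explicit partition works: the limit ordinals form a club, so any naive splitting (e.g.\ by parity) puts a whole club on one side and fails.

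Fixing any stationary co-stationary $A$ then completes the construction, exhibiting a symmetric topological well order on $\omega_1$ and showing that $\omega_1$ is St\"ackel-compact. For the final clause, $\omega_1$ is not compact, since the open cover $\{[0,\alpha):\alpha<\omega_1\}$ has no finite (indeed no countable) subcover; together with Proposition~\ref{prop:compactimpliesSC} this yields that St\"ackel-compactness does not imply compactness.
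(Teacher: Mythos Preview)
Your proof is correct and follows essentially the same approach as the paper: partition $\omega_1$ into two stationary sets $A$ and $B$, order $A$ by $<$ followed by $B$ with the reverse order, and observe that a closed set contained entirely in one piece must be bounded (equivalently, countable) because neither piece contains a club. Your exposition is in fact a bit more explicit than the paper's in identifying exactly which elements serve as the $\prec$-extrema and in isolating the precise condition on the partition (stationary, co-stationary) that makes the construction work.
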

\begin{proof}
Let \(<\) denote the usual ordering on ordinals.
Partition \(\omega_1\) into two stationary sets \(A\) and \(B\),
and let \(\prec\) be the order on \(\omega_1\) in which
``\(A\) is followed by the reverse of \(B\)'', or more precisely
by defining \(\alpha \prec \beta\)
if and only if either \(\alpha \in A\) and \(\beta \in B\),
or \(\alpha, \beta \in A\) and \(\alpha < \beta\),
or \(\alpha, \beta \in B\) and \(\alpha > \beta\).

Let \(F\) be a non-empty closed subset of \(\omega_1\).
We will show that \(F\) contains both a \(\prec\)-least element and
a \(\prec\)-greatest element.

If \(F\) meets both \(A\) and \(B\) then \(F\) clearly has both
a \(\prec\)-least element and a \(\prec\)-greatest element,
so assume that either \(F \subseteq A\) or \(F \subseteq B\).
Then \(F\) must be countable, since an uncountable closed set
in \(\omega_1\) would meet both \(A\) and \(B\).
Suppose that \(F \subseteq A\).
Now \(F\) is a
non-empty countable closed set in \(\omega_1\), and so
\(F\) contains a \(<\)-least and a \(<\)-greatest ordinal
under the usual ordering \(<\) of the ordinals.  But on \(A\),
the ordering \(\prec\) coincides with the usual ordering \(<\)
of the ordinals, so \(F\) contains both a \(\prec\)-least element and
a \(\prec\)-greatest element.
(A similar argument applies if \(F \subseteq B\).)

Thus \(\omega_1\) is St\"ackel-compact.
\end{proof}

A similar argument shows that the
long line
is St\"ackel-compact.
Also,
for general ordinal spaces,
it will follow from the results of this article that
a limit ordinal
\(\lambda\) of uncountable cofinality is St\"ackel-compact
if \(\lambda < \omega_2\), and under V=L all limit ordinals
of uncountable cofinality are St\"ackel-compact.

\medskip

The product of two St\"ackel-compact spaces may not be St\"ackel-compact
(Corollary~\ref{coro:negprod}).  However, if one of the two spaces is
additionally assumed to be compact, then the product will be
St\"ackel-compact.
\begin{proposition}\label{prop:prodcompsc}
Let \(X\) and \(Y\) be Hausdorff spaces.  If \(X\) is St\"ackel-compact
and \(Y\) is compact, then \(X \times Y\) is St\"ackel-compact.
\end{proposition}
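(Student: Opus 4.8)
The plan is to equip $X \times Y$ with the lexicographic order built from symmetric topological well-orders on each factor. Since $Y$ is compact Hausdorff, Proposition~\ref{prop:compactimpliesSC} supplies a symmetric topological well-order $\prec_Y$ on $Y$; let $\prec_X$ be one on $X$. Define $\prec$ on $X \times Y$ by declaring $(x_1,y_1) \prec (x_2,y_2)$ precisely when $x_1 \prec_X x_2$, or when $x_1 = x_2$ and $y_1 \prec_Y y_2$. It is routine to check that $\prec$ is a linear order and that $X \times Y$ is $T_2$; the substance is to verify that every non-empty closed set $F \subseteq X \times Y$ has a $\prec$-least and a $\prec$-greatest element.

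The key observation, and the one place where compactness of $Y$ does real work beyond St\"ackel-compactness, is that the projection $\pi_X \colon X \times Y \to X$ is a closed map. First I would recall why: if $x \notin \pi_X(F)$, then the slice $\{x\} \times Y$ misses the closed set $F$, and the tube lemma (using compactness of $Y$) yields an open $U \ni x$ with $(U \times Y) \cap F = \emptyset$, so $U \cap \pi_X(F) = \emptyset$; hence $\pi_X(F)$ is closed in $X$.

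Granting this, the least element is found in two steps. Because $\pi_X(F)$ is a non-empty closed subset of the St\"ackel-compact space $X$, it has a $\prec_X$-least element $x_0$. The fiber $F_{x_0} = \{\, y \in Y : (x_0,y) \in F \,\}$ is then a non-empty closed subset of $Y$ (it is the image of $F \cap (\{x_0\} \times Y)$ under the homeomorphism $\{x_0\} \times Y \cong Y$), so it has a $\prec_Y$-least element $y_0$. I would then check that $(x_0,y_0)$ is $\prec$-least in $F$: any $(x,y) \in F$ has $x \in \pi_X(F)$, so either $x_0 \prec_X x$, giving $(x_0,y_0) \prec (x,y)$ outright, or $x = x_0$, forcing $y \in F_{x_0}$ and hence $y_0 \preceq_Y y$.

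The greatest element is produced by the mirror-image argument: take the $\prec_X$-greatest $x_1 \in \pi_X(F)$, then the $\prec_Y$-greatest $y_1$ in the non-empty closed fiber $F_{x_1}$, and verify that $(x_1,y_1)$ is $\prec$-greatest in $F$. I expect no obstacle here beyond bookkeeping. The only genuinely non-formal ingredient is the closedness of $\pi_X$, which is precisely why the hypothesis that $Y$ is compact (rather than merely St\"ackel-compact or countably compact) cannot be dropped from this argument, since projection along a non-compact factor can fail to be a closed map.
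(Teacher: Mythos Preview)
Your proof is correct and follows essentially the same route as the paper: equip \(X \times Y\) with the lexicographic order coming from symmetric topological well-orders on the factors, use compactness of \(Y\) to ensure the projection of a closed set onto \(X\) is closed, pick the least element there, and then the least element in the corresponding fiber. Your write-up is somewhat more explicit (spelling out the tube-lemma argument and the verification that \((x_0,y_0)\) is \(\prec\)-least), but there is no substantive difference.
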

\begin{proof}
By Proposition~\ref{prop:compactimpliesSC}, \(Y\) is also St\"ackel-compact.
Therefore we can fix symmetric topological well orders on \(X\) and on \(Y\).
Then the lexicographical order on \(X \times Y\) defined by
\[
(u,v) \prec (x,y)
\iff
u \prec x \text{ in \(X\), or }
u = x \text{ and \(v \prec y\) in \(Y\)}
\]
is a symmetric topological well order on \(X \times Y\).
To see this, let \(C\) be a non-empty closed subset of \(X \times Y\).
Then the projection \(P\) of \(C\) onto \(X\),
\[
P := \{ x \in X \colon (x,y) \in C \text{ for some \(y \in Y\)} \}
\]
is also a non-empty closed subset of \(X\) since \(Y\) is compact.
So \(P\) will have a least element, say \(x_0\), with respect to the
symmetric topological well order on \(X\).  Now the set
\(Q := \{ y \in Y \colon (x_0, y) \in C \}\) is a non-empty closed
subset of \(Y\) and so will have a least element, say \(y_0\),
with respect to the symmetric topological well order on \(Y\).
Then \((x_0, y_0)\) will be the least element of \(C\) under the
lexicographic order.  Similarly \(C\) also has a largest element.
\end{proof}
\begin{corollary}%
\label{coro:prodexample}
The product space \(\omega_1 \times (\omega_1 + 1)\) is St\"ackel-compact.
We thus have St\"ackel-compact Tychonov spaces which are not normal.
(This answers a question of T.\ S.\ S.\ R.\ K.\ Rao from
ICAACA 2020 conference, India.)
\end{corollary}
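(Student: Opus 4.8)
The plan is to obtain St\"ackel-compactness directly from Proposition~\ref{prop:prodcompsc}. The space \(\omega_1+1\) (all ordinals \(\le\omega_1\) under the order topology) is compact, being a successor ordinal, while \(\omega_1\) is St\"ackel-compact by Theorem~\ref{theo:omegaone}. Taking \(X=\omega_1\) and \(Y=\omega_1+1\) in Proposition~\ref{prop:prodcompsc} then immediately yields that \(\omega_1\times(\omega_1+1)\) is St\"ackel-compact. Since each factor is an ordinal space, hence normal Hausdorff and in particular Tychonov, and since the Tychonov property is productive, the product is Tychonov as well. The only substantive content is therefore the failure of normality.

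For non-normality I would work in \(P=[0,\omega_1)\times[0,\omega_1]\) and exhibit two disjoint closed sets that cannot be separated by disjoint open sets, namely the diagonal \(A=\{(\alpha,\alpha):\alpha<\omega_1\}\) and the top edge \(B=[0,\omega_1)\times\{\omega_1\}\). These are visibly disjoint, and \(B\) is closed because \(\{\omega_1\}\) is closed in \([0,\omega_1]\). For \(A\) I would check directly that no point off the diagonal is a limit point: a point \((\beta,\gamma)\) with \(\gamma<\omega_1\) and \(\gamma\neq\beta\) is separated from \(A\) by a box \([0,\beta]\times(\beta,\omega_1]\) (or its mirror image when \(\beta>\gamma\)), since such a box can contain no pair \((\alpha,\alpha)\); and a top-edge point \((\beta,\omega_1)\) is separated from \(A\) by a bounded horizontal neighborhood of \(\beta\) crossed with a tail \((\delta,\omega_1]\) whose cutoff \(\delta\) exceeds the sup of that horizontal neighborhood.

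The heart of the argument, and the step I expect to be the main obstacle, is showing \(A\) and \(B\) cannot be separated; here I would invoke Fodor's pressing-down lemma. Assume toward a contradiction that \(U\supseteq A\) and \(W\supseteq B\) are disjoint open sets. For each limit ordinal \(\alpha\), the point \((\alpha,\alpha)\in U\) has a basic box neighborhood inside \(U\), so there is \(\phi(\alpha)<\alpha\) with \((\phi(\alpha),\alpha]\times(\phi(\alpha),\alpha]\subseteq U\). As \(\phi\) is regressive on the club of limit ordinals below \(\omega_1\), Fodor's lemma produces a stationary set \(S\) and a fixed \(\rho<\omega_1\) with \(\phi\equiv\rho\) on \(S\). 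Since \(S\) is unbounded, taking the union of the squares \((\rho,\alpha]\times(\rho,\alpha]\) over \(\alpha\in S\) shows \((\rho,\omega_1)\times(\rho,\omega_1)\subseteq U\).

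Finally I would extract the contradiction from the \(B\)-side. For each \(\alpha<\omega_1\) the point \((\alpha,\omega_1)\in W\) gives some \(\beta_\alpha<\omega_1\) with \(\{\alpha\}\times(\beta_\alpha,\omega_1]\subseteq W\). Fixing any \(\alpha_0\) with \(\rho<\alpha_0<\omega_1\), we have \(\{\alpha_0\}\times(\rho,\omega_1)\subseteq U\) and \(\{\alpha_0\}\times(\beta_{\alpha_0},\omega_1]\subseteq W\); choosing an ordinal \(y\) with \(\max(\rho,\beta_{\alpha_0})<y<\omega_1\) places \((\alpha_0,y)\) in both \(U\) and \(W\), contradicting \(U\cap W=\emptyset\). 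Hence \(\omega_1\times(\omega_1+1)\) is not normal, which together with the first paragraph completes the desired example of a St\"ackel-compact Tychonov space that fails to be normal.
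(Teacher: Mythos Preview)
Your derivation of St\"ackel-compactness is exactly the paper's (implicit) argument: apply Proposition~\ref{prop:prodcompsc} with \(X=\omega_1\) (St\"ackel-compact by Theorem~\ref{theo:omegaone}) and \(Y=\omega_1+1\) (compact). The paper states the corollary without proof, taking the non-normality of \(\omega_1\times(\omega_1+1)\) as a classical textbook fact; your added Fodor-based argument for non-normality is correct, though heavier than necessary---an elementary diagonalization (iterating \(\alpha\mapsto\) a bound for the vertical ``tail'' at \(\alpha\)) suffices and avoids the pressing-down lemma.
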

The basic observations of this section indicate that
St\"ackel-compactness
behaves in ways similar (but not identical) to some other
variants of compactness.  Like pseudo\-compactness
and countable compactness, St\"ackel-compactness
is a necessary but not sufficient condition
for compactness in Hausdorff spaces, and in metric spaces
it coincides with compactness.
Unlike pseudo\-compactness, St\"ackel-compactness implies
countable compactness.
We next look at the question of reversal of this implication.

\section{The Case of Countably Compact Spaces.}
\label{sec:ccompact}
We now ask if the second implication mentioned after
Corollary~\ref{coro:metricequiv} can be reversed:

\smallskip
\noindent
\textbf{Question.\;}
Are all countably compact \(T_2\) spaces St\"ackel-compact?
\smallskip

We do not know the full answer to this question,
but we will prove a partial result and
show that certain types of countably compact spaces
are St\"ackel compact, under certain restrictions.
This is the main result of this article, Theorem~\ref{theo:main}.

When we try to improve Theorem~\ref{theo:main} by
relaxing its restrictive conditions,
we get into set theoretical considerations involving
additional hypotheses
beyond the standard
ZFC axioms
(Section~\ref{sec:bzfc}).

However, in this section
all results are proved under
ZFC.
First, we set up and review some standard terminology and notation.

\begin{definition}
Let \(X\) be a topological space and let \(E\) be a subset of \(X\).
\begin{enumerate}
\item
\(\Lim E\) denotes the set of limit points of \(E\).
\item
\(E\) is \emph{perfect} if
\(E\) is closed and
dense-in-itself,
that is, if \(\Lim E = E\).
\item
The space \(X\) is \emph{scattered}
if no non-empty subset is perfect.
\end{enumerate}
\end{definition}
We get the \emph{Cantor-Bendixson derivatives} of
\(X\)
by repeatedly applying the \(\Lim\) operation through all ordinals,
taking intersections
at limit stages:
\begin{definition}
Let \(X\) be a topological space.
For each ordinal \(\alpha\) we define a subset \(X^{(\alpha)}\)
of \(X\) by transfinite recursion as follows:
\begin{align*}
X^{(0)} &:= X,\\
X^{(\alpha+1)} &:= \Lim X^{(\alpha)},\\
X^{(\alpha)} &:= \bigcap_{\beta < \alpha} X^{(\beta)}\quad
\text{if \(\alpha\) is a limit ordinal.}
\end{align*}
\(X^{(\alpha)}\) is called
\emph{the \(\alpha\)-th Cantor-Bendixson derivative of \(X\).}
\end{definition}
The Cantor-Bendixson derivatives \(X^{(\alpha)}\)
are closed sets that decrease with \(\alpha\),
i.e.\@ \(\alpha < \beta \implies X^{(\alpha)} \supseteq X^{(\beta)}\):
\[
X = X^{(0)}
\supseteq X^{(1)} 
\supseteq X^{(2)} 
\supseteq
\;\cdots\;
\supseteq X^{(\alpha)} \supseteq X^{(\alpha + 1)}
\supseteq
\;\cdots\;,
\]
and there must be an ordinal \(\rho\) with \(X^{(\rho+1)} = X^{(\rho)}\).
\begin{definition}
For a topological space \(X\),
the least ordinal \(\rho = \rho(X)\) such that \(X^{(\rho+1)} = X^{(\rho)}\)
is called the \emph{Cantor-Bendixson rank}
(or \emph{CB-rank}) of \(X\).
\end{definition}
Note that if \(\rho = \rho(X)\) is the Cantor-Bendixson rank of
\(X\), then
\(X\) is perfect if and only if \(\rho = 0\), and
\(X\) is scattered if and only if \(X^{(\rho)} = \emptyset\).

Also,
if \(X\) is countably compact and scattered, then its
CB-rank \(\rho = \rho(X)\) is either a successor ordinal
or must have uncountable cofinality (\(\cf \rho > \omega\)).

The simplest example of a countably compact scattered Hausdorff space
with uncountable CB-rank is \(\omega_1\) (under the usual order topology),
which we saw in Theorem~\ref{theo:omegaone} to be St\"ackel-compact.
We may therefore try to somehow ``lift the proof'' of
Theorem~\ref{theo:omegaone} to general
countably compact scattered Hausdorff spaces.  This is done in
Theorem~\ref{theo:scattered} below under a ``reflection
assumption''.

\medskip

We now state our main result.

\begin{theorem}[ZFC]\label{theo:main}
Let \(X\) be a scattered Hausdorff space with
CB-rank less than \(\omega_2\).
Then
\(X\) is St\"ackel-compact if and only if \(X\) is
countably compact.
\end{theorem}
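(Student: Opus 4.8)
The forward implication is already in hand: by the proposition that every St\"ackel-compact space is countably compact, any St\"ackel-compact \(X\) is countably compact irrespective of its rank. The entire content therefore lies in the reverse implication, and my plan is to assume that \(X\) is countably compact (scattered, of CB-rank \(\rho<\omega_2\)) and to build an explicit symmetric topological well order \(\prec\) on \(X\), by transfinite induction on \(\rho\).

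Before the induction I would record two structural facts that tame closed sets, mirroring the two ingredients of the proof of Theorem~\ref{theo:omegaone}. First, if \(F\subseteq X\) is closed then \(F\) is countably compact and scattered, so its last non-empty Cantor--Bendixson derivative is closed and discrete in \(F\); being a closed discrete subset of the countably compact space \(F\) it is finite and non-empty, so every non-empty closed \(F\) carries a non-empty finite set of ``top points''. Second, every countable closed subset of \(X\) is Lindel\"of and countably compact, hence compact, hence St\"ackel-compact by Proposition~\ref{prop:compactimpliesSC}; so countable closed sets never obstruct the existence of a least and a greatest element. These are the space-level analogues of the facts that uncountable closed subsets of \(\omega_1\) are clubs while countable ones are compact.

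The construction generalizes the stationary splitting of that proof, organized along the Cantor--Bendixson levels. Using the recorded dichotomy that \(\rho\) is a successor or has uncountable cofinality, I would separate the stages. At a successor stage \(\rho=\gamma+1\) I would apply the inductive hypothesis to the derived set \(X^{(1)}\) (closed, of rank \(\gamma\)) and then weave in the open discrete set \(L_0\) of isolated points, arranging that any infinite family of isolated points lying in a closed set inherits a \(\prec\)-greatest from a limit point placed just \(\prec\)-above it, while keeping a \(\prec\)-least from a fixed well order of \(L_0\). The genuinely delicate stages are the limit ordinals \(\rho\) of cofinality \(\omega_1\) (the only uncountable cofinality below \(\omega_2\)): there I would fix a strictly increasing continuous cofinal map \(\langle \rho_\xi : \xi<\omega_1\rangle\) into \(\rho\), split a stationary subset of \(\omega_1\) into two stationary pieces, and use them to colour the ``slabs'' \(X^{(\rho_\xi)}\setminus X^{(\rho_{\xi+1})}\) into an \(A\)-part and a \(B\)-part, ordering \(A\) upward and \(B\) downward exactly as in Theorem~\ref{theo:omegaone} and inserting inductively given orders inside each slab.

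The crux---and the step I expect to be hardest---is the reflection lemma that certifies this order: every non-empty closed \(F\) that reaches the top cofinal level must meet both the \(A\)-part and the \(B\)-part, so its \(\prec\)-least comes from \(A\) and its \(\prec\)-greatest from \(B\), whereas every closed \(F\) trapped inside one part reflects to strictly lower CB-rank and is absorbed by the inductive hypothesis (or is countable, hence compact). Proving this is a pressing-down argument: for such an \(F\) the set of \(\xi<\omega_1\) whose slab \(F\) meets should contain a club, so that it meets both stationary pieces, and Fodor's lemma together with the continuity of \(\langle \rho_\xi\rangle\) is what forces a closed set reaching the top to have club-like trace. This is exactly where \(\rho<\omega_2\) is indispensable: it confines every limit stage to cofinality \(\le\omega_1\), where splitting stationary sets and the club filter are available in ZFC, whereas at cofinality \(\omega_2\) the analogous reflection can consistently fail---which is why lifting the rank restriction later forces hypotheses beyond ZFC such as V=L. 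Accordingly I would isolate this reflection property as a named hypothesis, derive St\"ackel-compactness from it by the construction above, and then discharge it for every \(\rho<\omega_2\) using the \(\omega_1\)-combinatorics just described.
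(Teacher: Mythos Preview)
Your closing paragraph lands on the paper's architecture: isolate a reflection hypothesis (disjoint $A,B\subseteq\rho$ each reflecting everywhere on $\rho$), derive St\"ackel-compactness from it via an $A$-part/$B$-part order, and discharge the hypothesis for $\rho<\omega_2$ by ordinal combinatorics. But the body of the proposal conflates two inductions, and one of them does not work. The problematic step is ``every closed $F$ trapped inside one part \ldots\ is absorbed by the inductive hypothesis.'' If your induction hypothesis is on spaces (every countably compact scattered $T_2$ space of rank $<\rho$ is St\"ackel-compact), it tells you only that such an $F$ carries \emph{some} symmetric topological well order; it says nothing about $F$ having a greatest element in the fixed global order $\prec$ you are building on $X$. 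Likewise, ``inserting inductively given orders inside each slab'' is ill-posed: the slabs $X^{(\rho_\xi)}\setminus X^{(\rho_{\xi+1})}$ are not closed, and even if they were, an order on a slab furnished by the hypothesis need not cohere with $\prec$.

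The paper untangles this by never inducting on spaces in the order-construction step (Theorem~\ref{theo:scattered}): one fixes once and for all a well order of $X$ stacking the levels $Y_\alpha=X^{(\alpha)}\setminus X^{(\alpha+1)}$ increasingly; for $F\subseteq X_A$ one shows directly that $C:=\{\alpha:F\cap Y_\alpha\neq\emptyset\}$ has a maximum --- otherwise $\mu:=\min(\Lim C\setminus C)$ has $\cf\mu\ge\omega_1$ by countable compactness, $C\cap\mu$ is club in $\mu$, and $B$ reflecting at $\mu$ forces $C\cap B\neq\emptyset$, contradicting $C\subseteq A$ --- and then $F\cap Y_{\max C}$ is finite and supplies the $\prec$-greatest element. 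No Fodor, no pressing down, no induction on $F$. The only induction is purely ordinal (Theorem~\ref{theo:lessthanomega2}), manufacturing the reflecting pair $A,B$ for each $\rho<\omega_2$; there one must also treat the case $\cf\rho=\omega$ (which your outline omits), since the inductive gluing of reflecting pairs along a cofinal sequence passes through such ordinals even though the CB-rank of $X$ itself never has countable cofinality.
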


Using Theorem~\ref{theo:main}, we get more examples
of St\"ackel-compact spaces:
\(X := \omega_1 \times \omega_1\),
\(Y := (\omega_1 + 1) \times (\omega_1 + 1) \setminus
\{ (\omega_1, \omega_1) \} \), and
\(Z := \omega_1 \times (\omega_1 + 1)\).
(In Corollary~\ref{coro:prodexample},
\(Z\) was shown to be St\"ackel-compact 
using Proposition~\ref{prop:prodcompsc}, but
Proposition~\ref{prop:prodcompsc} does not help
for \(X\) or \(Y\).)

\medskip

The rest of the section is for the proof of Theorem~\ref{theo:main}.

\begin{definition}
Let \(A\) be a set and let \(\alpha\) be an ordinal with \(\cf \alpha > \omega\).
We say that:
\begin{enumerate}
\item
\emph{\(A\) reflects at \(\alpha\)} if \(A \cap \alpha\) is stationary in \(\alpha\).
\item
\emph{\(A\) reflects everywhere on \(\rho\)} (where \(\rho\) is an
arbitrary ordinal) if
\(A\) reflects at \(\beta\)
for every \(\beta \leq \rho\) with
\(\cf \beta > \omega\).
\end{enumerate}
\end{definition}

\begin{theorem}[ZFC]\label{theo:scattered}
Let \(X\) be a countably compact scattered Hausdorff space with
CB-rank \(\rho = \rho(X)\), and suppose that
there exist disjoint sets \(A\) and \(B\) each of which reflects
everywhere on \(\rho\).  Then \(X\) is St\"ackel-compact.
\end{theorem}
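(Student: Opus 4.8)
The plan is to imitate the proof of Theorem~\ref{theo:omegaone}, replacing the single partition of \(\omega_1\) into two stationary sets by a level-by-level partition driven by the Cantor--Bendixson rank. First I would record the basic machinery. Since \(X\) is scattered, \(X^{(\rho)}=\emptyset\), so every point \(x\) lies in a unique \emph{level} \(X_\alpha := X^{(\alpha)}\setminus X^{(\alpha+1)}\); write \(r(x)=\alpha\) for its rank. Each \(X_\alpha\) is the set of isolated points of the closed (hence countably compact) subspace \(X^{(\alpha)}\); therefore any subset of \(X_\alpha\) that is closed in \(X\) is a closed discrete subspace of a countably compact space, and so is \emph{finite}. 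Replacing \(A\) by \(A\cup(\{\alpha:\alpha<\rho\}\setminus(A\cup B))\), I may also assume that \(A\) and \(B\) partition the ordinals below \(\rho\); this preserves disjointness and, since enlarging a set cannot destroy stationarity, it preserves reflection everywhere on \(\rho\).

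Next I would prove the key dichotomy for a non-empty closed \(F\subseteq X\). As \(F\) is closed in the countably compact space \(X\), the sets \(F\cap X^{(\beta)}\) are closed, decreasing, and have empty intersection (since \(F\cap X^{(\rho)}=\emptyset\)); so there is a least \(\delta=\delta(F)\) with \(F\cap X^{(\delta)}=\emptyset\). Countable compactness forbids a strictly decreasing \(\omega\)-sequence of non-empty closed sets with empty intersection, so \(\delta(F)\) is either a successor or a limit with \(\cf\delta(F)>\omega\). In the successor case \(\delta(F)=\gamma+1\) the top trace \(F\cap X^{(\gamma)}\subseteq X_\gamma\) is non-empty and finite by the previous paragraph; in the limit case \(F\) has points of rank cofinal in \(\delta(F)\).

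With this in hand I would build a symmetric topological well order \(\prec\) by transfinite recursion along the levels. At a successor rank \(\rho=\sigma+1\) the finitely many deepest points form the finite set \(X^{(\sigma)}\); separating them using Hausdorffness and continuing the recursion on the lower-rank material, one assembles the order from finitely many pieces together with Proposition~\ref{prop:stackfin} for the top. The heart is the limit case \(\cf\rho>\omega\): here I would place the levels \(X_\alpha\) with \(\alpha\in A\) ``increasingly'' and those with \(\alpha\in B\) ``decreasingly'', with the lower-rank material nested around its accumulation points rather than laid out in flat blocks (so that the pieces actually controlled at each scale remain finite), exactly mirroring the ``\(A\) followed by the reverse of \(B\)'' device of Theorem~\ref{theo:omegaone}. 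One then verifies that every non-empty closed \(F\) has a \(\prec\)-least and a \(\prec\)-greatest by cases on \(\delta(F)\): the successor case falls to the finite top trace and the recursion, and the limit case requires that \(F\) straddle the \(A\)-side and the \(B\)-side.

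The main obstacle is precisely this last step: showing that a ``tall'' closed \(F\) (cofinal in rank up to a limit \(\beta=\delta(F)\) with \(\cf\beta>\omega\)) is forced to meet the parts governed by \(A\) and by \(B\) in a way that yields genuine extrema. The naive attempt --- that the set \(R_F=\{r(x):x\in F\}\) of attained ranks is a club, and so meets the stationary sets \(A\cap\beta\) and \(B\cap\beta\) --- fails, because \(R_F\) need not be closed: a limit of attained ranks may be realized in \(F\) only by a point of strictly larger rank. The plan is to repair this by extracting a genuine club from the \emph{intrinsic} Cantor--Bendixson analysis of \(F\) (the levels at which \(F\) itself accumulates), on which the reflection hypothesis --- stationarity of \(A\cap\beta\) and of \(B\cap\beta\) for every \(\beta\le\rho\) of uncountable cofinality --- can be applied to pin down the accumulation points furnishing both the \(\prec\)-least and the \(\prec\)-greatest of \(F\). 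Marshalling this together with the coherence of the transfinite recursion (so that the orders chosen at all levels fit into a single order on \(X\)) is where essentially all the work lies; the reflection assumption is exactly what lets the \(\omega_1\)-argument run simultaneously at every uncountable-cofinality level \(\beta\le\rho\).
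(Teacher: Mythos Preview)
Your proposal is over--engineered and leaves the central step undone. The paper's argument needs no transfinite recursion, no ``nesting around accumulation points'', and no case split on whether \(\rho\) is a successor or a limit. One simply fixes a single well--order of \(X\) in which the levels \(Y_\alpha := X^{(\alpha)}\setminus X^{(\alpha+1)}\) appear in increasing order of \(\alpha\), sets \(X_A:=\bigcup_{\alpha\in A}Y_\alpha\) and \(X_B:=\bigcup_{\alpha\in B}Y_\alpha\), and then takes the order that keeps this well--order on \(X_A\), reverses it on \(X_B\), and puts all of \(X_A\) before \(X_B\). If a non--empty closed \(F\) meets both pieces, it trivially has both extrema; otherwise, say \(F\subseteq X_A\), and only a \(\prec\)-greatest element is in question.

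You are right that \(C=R_F=\{\alpha:F\cap Y_\alpha\neq\emptyset\}\) need not be globally closed, but the fix is not an ``intrinsic Cantor--Bendixson analysis of \(F\)''. The paper's key move is to pass to \(\mu:=\min(\Lim C\setminus C)\): by the very minimality of \(\mu\), the set \(C\cap\mu\) \emph{is} closed (and clearly unbounded) in \(\mu\). Countable compactness forces \(\cf\mu>\omega\), so the reflection hypothesis applies at \(\mu\): \(B\cap\mu\) is stationary in \(\mu\) and must meet the club \(C\cap\mu\subseteq A\), a contradiction. Hence \(C\) has a largest element \(\gamma\); then \(F\cap Y_\gamma\) is a non--empty closed subset of a single level, hence finite, and its maximum is the \(\prec\)-greatest element of \(F\). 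This dispenses with the recursion, the ``coherence'' worries, and the successor/limit dichotomy you set up, and shows that the ``flat blocks'' layout you explicitly reject is precisely what works.
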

\begin{proof}
The proof
improves upon
the proof that \(\omega_1\) is
St\"ackel-compact.

By the given condition, we can partition \(\rho\) into two disjoint sets
\(A, B\) such that each of \(A\) and \(B\) reflects everywhere on \(\rho\). 

Define, for each ordinal \(\alpha\):
\[
Y_\alpha := X^{(\alpha)} \setminus X^{(\alpha + 1)}.
\]
Then \(\{ Y_\alpha \,\colon\; \alpha < \rho \}\) forms a partition of \(X\).

Fix a well order of \(X\) such that \(Y_\alpha\) precedes \(Y_\beta\)
in this order if \(\alpha < \beta < \rho\).

Now define:
\[
X_A := \bigcup_{\alpha \in A} Y_\alpha
\qquad
\text{ and }
\qquad
X_B := \bigcup_{\beta \in B} Y_\beta.
\]
Then \(X_A\) and \(X_B\) form a partition of \(X\).

Now take the order on \(X\) in which \(X_A\) precedes \(X_B\),
\(X_A\) is ordered by the above well-order,
and \(X_B\) is ordered by the reverse of that well-order.

We now show that under this order, every non-empty closed
subset \(F\) of \(X\) has a least and a greatest element.

Given a non-empty closed set \(F\) in \(X\), consider the two sets
\(F \cap X_A\) and  \(F \cap X_B\).  If both of these sets are
non-empty, then \(F\) will contain least and greatest elements
(since \(X_A\) is well-ordered and \(X_B\) is reverse well-ordered
by our new chosen order on \(X\)).  So we may assume that one of
the sets \(F \cap X_A\) and \(F \cap X_B\) is empty, and without
loss of generality that \(F \cap X_B = \emptyset\), that is,
\(F \subseteq X_A\).  So \(F\) has a least element. We will show
that \(F\) has a greatest element as well.

Define:
\[
C := \{ \alpha < \rho \,\colon\; F \cap Y_\alpha \neq \emptyset \}.
\]
Then \(C \subseteq A\) by our assumption that \(F \subseteq X_A\), and
\(C \neq \emptyset\) since \(F \neq \emptyset\).

If \(C\) has a largest element \(\mu\), then \(F \cap Y_\mu\) must be
non-empty finite by countable compactness, and so will have a largest
element, which must then be the greatest element of \(F\).  Hence
it suffices to show that \(C\) has a largest element.

Suppose (for contradiction) that \(C\) does not have a largest element.
Then \(\sup C \in \Lim C \,\setminus\, C\).  Let:
\[
\mu := \min ( \Lim C \,\setminus\, C).
\]
Thus \(\mu\) is a limit ordinal \(\leq \rho\),
and \(\cf \mu \geq \omega_1\) by countable
compactness.  Note that \(C \cap \mu\) is a
closed unbounded set in \(\mu\).  Now,
since \(B\) reflects at \(\mu\), \(B \cap \mu\) is stationary in \(\mu\),
and so \((C \cap \mu) \cap (B \cap \mu)\) must be non-empty.
But this implies that \(C \cap B \neq \emptyset\) which is a contradiction
since \(C \subseteq A\).
\end{proof}
Our goal now is to try to use the above theorem to show that
if a scattered Hausdorff space is countably compact, then
it is St\"ackel-compact.
But, as mentioned earlier,
we are unable to do this without
additional set-theoretic hypothesis beyond ZFC
(Section~\ref{sec:bzfc}).
In ZFC alone,
we can
use the next theorem below along with Theorem~\ref{theo:scattered}
to obtain the result for spaces with CB-rank \(< \omega_2\),
giving us Theorem~\ref{theo:main}.
\begin{theorem}[ZFC]%
\label{theo:lessthanomega2}
If \(\rho < \omega_2\), then there exist disjoint sets
\(A\) and \(B\) each of which reflects everywhere on \(\rho\).
\end{theorem}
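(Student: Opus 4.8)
The plan is to prove the stronger statement that for every $\rho<\omega_2$ the ordinal $\rho$ can be \emph{partitioned} into two sets $A$ and $B$ each reflecting everywhere on $\rho$, proceeding by transfinite induction on $\rho$. The one structural fact I will lean on throughout is the \emph{tail-locality} of reflection: if $\cf\beta=\omega_1$ (the only uncountable cofinality available below $\omega_2$) and $\gamma<\beta$, then the end segment $(\gamma,\beta)$ is club in $\beta$, so a set $S\subseteq\beta$ is stationary in $\beta$ if and only if $S\cap(\gamma,\beta)$ is; moreover, altering $S$ on any set bounded in $\beta$ (in particular on any countable subset of $\beta$, since $\cf\beta=\omega_1$) changes neither its stationarity nor its costationarity. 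Combined with the standard facts that an order isomorphism between ordinals preserves clubs, stationary sets, and cofinalities, and that the image of a stationary subset of $\omega_1$ under a continuous increasing cofinal map into $\rho$ is stationary in $\rho$, this is essentially all that is needed.

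The successor and countable-cofinality cases are then routine. If $\rho=\delta+1$, then $\cf\rho=1$, so reflecting everywhere on $\rho$ is the same as reflecting everywhere on $\delta$; I take the partition supplied by the inductive hypothesis for $\delta$ and assign the extra point $\delta$ arbitrarily (this disturbs no trace $A\cap\beta$ with $\beta\le\delta$). If $\rho$ is a limit with $\cf\rho=\omega$, fix $\rho_n\nearrow\rho$ and chop $\rho$ into the intervals $I_n:=[\rho_{n-1},\rho_n)$ (with $\rho_{-1}:=0$). By tail-locality, reflection at any relevant $\beta\in(\rho_{n-1},\rho_n]$ depends only on the part of $A$ lying in $I_n$, so the intervals may be treated independently: each $I_n$ is order-isomorphic to the ordinal $\eta_n:=\rho_n-\rho_{n-1}<\rho$, and transporting the inductive partition of $\eta_n$ across this isomorphism secures every relevant $\beta\le\rho_n$. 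Since $\cf\rho=\omega$, the point $\rho$ itself is not relevant, so no further work is required.

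The genuine work is the case $\cf\rho=\omega_1$, where $\rho$ itself must be reflected at, i.e.\ $A$ and $B$ must both be stationary in $\rho$. Fix a continuous increasing cofinal sequence $\seq{\rho_\xi : \xi<\omega_1}$ with $\rho_0=0$ and set $I_\xi:=[\rho_\xi,\rho_{\xi+1})$. As before, transporting the inductive partition of each $\eta_\xi:=\rho_{\xi+1}-\rho_\xi$ into $I_\xi$ secures reflection at every relevant $\beta<\rho$: such a $\beta$ lies in the interior of some $I_\xi$ or is a right endpoint $\rho_{\xi+1}$ (the interior limit points $\rho_\lambda$ with $\lambda<\omega_1$ a limit have $\cf\rho_\lambda=\cf\lambda\le\omega$ and are not relevant), and in each sub-case tail-locality reduces reflection at $\beta$ to the inductive hypothesis for $\eta_\xi$. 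To also reflect at $\rho$, fix once and for all a partition $\omega_1=S_A\sqcup S_B$ into two stationary sets and override the colour of the single left endpoint $\rho_\xi$ of each interval, placing $\rho_\xi$ into $A$ when $\xi\in S_A$ and into $B$ when $\xi\in S_B$. Then $\{\rho_\xi:\xi\in S_A\}\subseteq A$ is the image of a stationary set under a continuous increasing cofinal map, hence stationary in $\rho$, and likewise for $B$, so $\rho$ is reflected at by both. Finally, these overrides touch the colouring only on $\{\rho_\xi:\xi<\omega_1\}$, whose intersection with any relevant $\beta<\rho$ is countable and therefore bounded in $\beta$ and nonstationary; by tail-locality the overrides do not disturb reflection at any $\beta<\rho$. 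This closes the induction, and taking $A,B$ for $\rho$ itself gives the desired disjoint (indeed complementary) reflecting sets.

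I expect the main obstacle to be exactly the coherence issue just finessed: the naive idea of pasting together the inductively obtained partitions of initial segments fails, because those partitions are mutually incompatible. The resolution is the observation that reflection at a point of cofinality $\omega_1$ is a tail property, which decouples the interval pieces entirely and lets them be chosen independently; the only genuinely global requirement, stationarity at $\rho$ when $\cf\rho=\omega_1$, is then met by the bounded (hence reflection-neutral) marker points $\rho_\xi$ placed along a cofinal $\omega_1$-sequence.
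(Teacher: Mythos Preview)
Your proof is correct and follows essentially the same route as the paper's: transfinite induction on \(\rho\), splitting into the \(\cf\rho=\omega\) and \(\cf\rho=\omega_1\) cases, pasting together inductive solutions on intervals (using that reflection at any \(\beta\) of cofinality \(\omega_1\) depends only on a tail), and in the \(\omega_1\)-cofinality case injecting a stationary partition of \(\omega_1\) along the club of markers to secure reflection at \(\rho\) itself. The only cosmetic differences are that you transport the inductive hypothesis via the order-isomorphism with the interval's order type rather than applying it to the right endpoint, and you produce an actual partition (overriding marker colours) whereas the paper is content with disjoint sets and keeps its marker club \(L\) separate from \(A^*,B^*\) so no override is needed.
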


\begin{proof}
The proof is by induction on \(\rho\).

Suppose that \(\rho < \omega_2\) and that for every \(\xi < \rho\)
there are disjoint \(A_\xi, B_\xi\) each of which
reflects everywhere on \(\xi\).

Without loss of generality we can assume that \(\rho\) is a limit
ordinal, so there are two cases:  \(\cf \rho = \omega\) and
\(\cf \rho = \omega_1\).

\medskip

\textbf{Case 1}: \(\cf \rho = \omega\).  We can then choose
a countable sequence of ordinals
\[
0 = \rho_0  < \rho_1 < \dots < \rho_n < \rho_{n+1} < \dots
\]
such that \(\sup_n \rho_n = \rho\).  By induction hypothesis,
for each \(n \in \omega\) we can fix disjoint sets \(A_n, B_n\)
such that both of them reflect everywhere on \(\rho_{n+1}\).
Let:
\[
A := \bigcup_{n \in \omega} A_n \cap (\rho_{n+1} \setminus \rho_n)
\qquad \text{ and } \qquad
B := \bigcup_{n \in \omega} B_n \cap (\rho_{n+1} \setminus \rho_n).
\]
(For ordinals \(\alpha\) and \(\beta\), the set-difference
\(\alpha \setminus \beta\) equals
\(\{\xi \,\colon\; \beta \leq \xi < \alpha \}\).)

Notice that \(A \cap B = \emptyset\).  We show that both \(A\) and \(B\)
reflect everywhere on \(\rho\).  Suppose that \(\alpha \leq \rho\),
with \(\cf \alpha \geq \omega_1\).  Then \(0 < \alpha < \rho\) (since
\(\cf \rho = \omega\) and \(\cf \alpha \geq \omega_1\)), and so
there is \(n\) such that \(\rho_n < \alpha \leq \rho_{n+1}\).
Now \(A_n\) reflects everywhere on \(\rho_{n+1}\), so
\(A_n \cap \alpha\) is stationary in \(\alpha\), and therefore
\(A_n \cap (\alpha \setminus \rho_n)\) is also
stationary in \(\alpha\)
(as \(\alpha \setminus \rho_n\) is closed unbounded in \(\alpha\)).
But
\[
A_n \cap (\alpha \setminus \rho_n) \subseteq
A_n \cap (\rho_{n+1} \setminus \rho_n)
\subseteq A,
\]
hence \(A \cap \alpha\) is stationary in \(\alpha\).
Similarly, \(B \cap \alpha\) is stationary in \(\alpha\).
Thus both \(A\) and \(B\) reflect everywhere on \(\rho\).

\textbf{Case 2}: \(\cf \rho = \omega_1\).  Fix \(E \subseteq \rho\)
of order type \(\omega_1\) with \(\sup E = \rho\), and let
\(L := \rho \,\cap\, \Lim E\).
Then \(L\) is closed unbounded
in \(\rho\) of order type \(\omega_1\) and each \(\lambda \in L\)
is a limit ordinal of countable cofinality.
Enumerate \(L\) increasingly as
\(\seq{\lambda_\xi}_{\xi < \omega_1}\):
\[
L = \{ \lambda_\xi \,\colon\; 0 \leq \xi < \omega_1 \},
\quad\text{with \(\lambda_\alpha < \lambda_\beta\) for all
    \(\alpha < \beta < \omega_1\)}.
\]
Now, for each \(\alpha < \omega_1\), we have \(\lambda_{\alpha+1} < \rho\),
so by induction hypothesis we can find disjoint sets
\(A_\alpha, B_\alpha\) such that both
\(A_\alpha\) and \(B_\alpha\) reflect everywhere on
\(\lambda_{\alpha + 1}\).
Define:
\begin{align*}
A^* &:= 
    \bigcup_{\alpha < \omega_1} \left[ A_\alpha \cap
        (\lambda_{\alpha + 1} \setminus (\lambda_{\alpha} + 1)) \right],\\
B^* &:= 
    \bigcup_{\alpha < \omega_1} \left[ B_\alpha \cap
        (\lambda_{\alpha + 1} \setminus (\lambda_{\alpha} + 1)) \right].
\end{align*}
Note that the sets \(A^*\), \(B^*\), and \(L\) are pairwise disjoint.
As \(L\) is
closed unbounded
in \(\rho\), we can fix disjoint subsets
\(C\) and \(D\) of \(L\) that are stationary in $\rho$.  Finally,
define:
\[
A := A^* \cup C \qquad\text{and}\qquad B := B^* \cup D.
\]
Then \(A \cap B = \emptyset\).
We show that both \(A\) and \(B\)
reflect everywhere on \(\rho\).  Suppose that \(\alpha \leq \rho\),
with \(\cf \alpha > \omega\).
If \(\alpha = \rho\), then
\(C\), and so \(A\),
is stationary in \(\rho = \alpha\).
If \(\alpha < \rho\), then \(\lambda_{\xi} \leq \alpha < \lambda_{\xi + 1}\)
for some \(\xi < \omega_1\).  Since the limit ordinal \(\lambda_{\xi}\)
has countable cofinality but \(\cf \alpha > \omega\), we get
\(\lambda_{\xi} < \alpha\).  Also, \(A_\xi\) reflects everywhere on
\(\lambda_{\xi + 1}\), so \(A_\xi \cap \alpha\) is stationary in \(\alpha\),
and therefore \(A_\xi \cap (\alpha \setminus (\lambda_{\xi}+1))\)
is stationary in \(\alpha\)
(as \(\alpha \setminus (\lambda_{\xi}+1)\) is closed unbounded in \(\alpha\)).
But
\[
A_\xi \cap (\alpha \setminus (\lambda_{\xi}+1))
\subseteq
A_\xi \cap (\lambda_{\xi + 1} \setminus (\lambda_{\xi}+1))
\subseteq
A^*
\subseteq
A,
\]
hence \(A \cap \alpha\) is stationary in \(\alpha\).
Similarly, \(B \cap \alpha\) is stationary in \(\alpha\).
Thus both \(A\) and \(B\) reflect everywhere on \(\rho\).
\end{proof}

Theorem~\ref{theo:main} now follows immediately from
Theorem~\ref{theo:scattered} and Theorem~\ref{theo:lessthanomega2}.

\medskip

\section{CB-rank \(\omega_2\) and beyond.}%
\label{sec:bzfc}
This section improves Theorem~\ref{theo:main} using extra
set-theoretic hypotheses that are relatively consistent with ZFC.
We first show that under \(\square_{\omega_1}\), every
countably compact scattered Hausdorff space of CB-rank \(\omega_2\)
is St\"ackel-compact (Corollary~\ref{coro:myfinal}).
Then, combining our Theorem~\ref{theo:scattered} with a
result of Hamkins~\cite{mathoverflow},
we get:
\emph{Under ZFC\(+\)V=L,
St\"ackel-compactness coincides with countable compactness
in all scattered Hausdorff spaces}
(Theorem~\ref{theo:veql}).

We use set-theoretic terminology from Jech \cite{Jech}.
For any well-ordered set \(X\), let
\(\ord(X)\) denote
the unique ordinal order-isomorphic to \(X\)
(the order-type of \(X\)).
ZFC\(+\)V=L denotes the axioms of ZFC augmented with
G\"odel's Axiom of Constructibility V=L (which is
consistent relative to ZFC).

Let \(\kappa\) be an uncountable cardinal.
Jensen's \emph{square principle} \(\square_\kappa\)
is the following statement,
true under ZFC\(+\)V=L
(see Jech \cite{Jech}):

\smallskip

\noindent
\(\square_\kappa\):
There is a sequence
\(\seq{C_\alpha \colon \alpha < \kappa^+, \text{\(\alpha\) a limit ordinal}}\)
of sets, known as a \emph{\(\square_\kappa\)-sequence,}
such that for all limit \(\alpha < \kappa^+\) we have:
\begin{enumerate}
\item
\(C_\alpha\) is closed unbounded in \(\alpha\);
\item
\(\cf \alpha < \kappa\) implies
that \(C_\alpha\) has cardinality less than \(\kappa\); and
\item
\(\beta \in \Lim(C_\alpha)\) implies \(C_\beta = C_\alpha \cap \beta\).
\end{enumerate}
For such a \(\square_\kappa\)-sequence, we have
\(\ord(C_\alpha) \leq \kappa\) for all limit \(\alpha < \kappa^+\).
\begin{proposition}\label{prop:omegatwo}
Assume \(\square_{\omega_1}\).  Then \(\omega_2\) has a pair of disjoint
subsets which reflect everywhere on \(\omega_2\).
\end{proposition}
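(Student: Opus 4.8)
The plan is to fix a $\square_{\omega_1}$-sequence $\seq{C_\alpha \colon \alpha < \omega_2,\ \alpha \text{ a limit ordinal}}$ and use its coherence to transfer a partition of $\omega_1$ up to $\omega_2$ so that the transferred sets reflect at every relevant level at once. First I would fix two disjoint stationary sets $S_A, S_B \subseteq \omega_1$, each consisting of (countable) limit ordinals; these exist by the usual splitting of a stationary set. For each limit $\gamma < \omega_2$ the ordinal $\ord(C_\gamma)$ is defined, so I can set
\[
A := \{\gamma < \omega_2 \colon \ord(C_\gamma) \in S_A\}
\quad\text{and}\quad
B := \{\gamma < \omega_2 \colon \ord(C_\gamma) \in S_B\}.
\]
Since $S_A \cap S_B = \emptyset$, the sets $A$ and $B$ are automatically disjoint, so the entire content of the proof is to show that each reflects everywhere on $\omega_2$.

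The key computation I would carry out is that coherence makes the ``index'' of a point intrinsic. Concretely, if $\cf\alpha = \omega_1$ then $\ord(C_\alpha) = \omega_1$ (since $\ord(C_\alpha) \leq \omega_1$ while $\cf(\ord(C_\alpha)) = \cf\alpha = \omega_1$), so the increasing enumeration $e_\alpha \colon \omega_1 \to C_\alpha$ is an order isomorphism, continuous at limits because $C_\alpha$ is closed. For any limit $\nu < \omega_1$ the point $\gamma := e_\alpha(\nu)$ lies in $\Lim(C_\alpha)$, whence clause (3) of $\square_{\omega_1}$ gives $C_\gamma = C_\alpha \cap \gamma$ and therefore $\ord(C_\gamma) = \nu$. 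Thus membership of $\gamma$ in $A$ (resp.\ $B$) is governed exactly by whether its enumeration index $\nu$ lies in $S_A$ (resp.\ $S_B$). To see that $A$ reflects at such an $\alpha$, take any club $E \subseteq \alpha$; then $N := \{\nu < \omega_1 \colon e_\alpha(\nu) \in E\}$ is club in $\omega_1$ (it is the preimage of a club under a continuous cofinal map), so stationarity of $S_A$ yields some $\nu \in S_A \cap N$, and the witness $\gamma = e_\alpha(\nu)$ satisfies $\gamma \in E$ and $\ord(C_\gamma) = \nu \in S_A$, i.e.\ $\gamma \in A \cap E$. The same argument applies to $B$.

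The main obstacle, and the reason $\square_{\omega_1}$ rather than bare ZFC is invoked, is that a single set $A$ must reflect at every $\alpha$ of cofinality $\omega_1$ \emph{simultaneously}: the block-pasting induction of Theorem~\ref{theo:lessthanomega2} breaks down when $\cf\rho = \omega_2$, because a stationary subset of $\omega_2$ placed along a club need not reflect at the cofinality-$\omega_1$ points that arise as limits of the block structure. The coherence clause (3) is precisely what removes this difficulty, since it forces $C_\alpha \cap \gamma = C_\gamma$ and hence makes the index assignment globally consistent across all levels at once. Finally, reflection at $\omega_2$ itself would follow formally: given a club $E \subseteq \omega_2$, the set of $\alpha \in E$ with $\cf\alpha = \omega_1$ is nonempty (indeed stationary in $\omega_2$), and for such an $\alpha$ the reflection already established gives $A \cap \alpha \cap E \neq \emptyset$, so $A$ is stationary in $\omega_2$; likewise for $B$. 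This would establish that $A$ and $B$ are disjoint and reflect everywhere on $\omega_2$.
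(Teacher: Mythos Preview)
Your proof is correct and follows essentially the same approach as the paper: fix disjoint stationary sets of limit ordinals in \(\omega_1\) and lift them through the \(\square_{\omega_1}\)-sequence, using coherence to make the assignment consistent across all levels. Your global definition \(A=\{\gamma:\ord(C_\gamma)\in S_A\}\) is a mild streamlining of the paper's version, which instead forms \(A\) as the union over all \(\mu\) of cofinality \(\omega_1\) of the ``copies'' \(A_\mu=\{\xi\in C_\mu:\ord(C_\mu\cap\xi)\in A_0\}\) and then invokes coherence to check disjointness; your formulation makes disjointness immediate, but the underlying idea and the reflection argument are the same.

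One small imprecision in your last step: to conclude \(A\cap\alpha\cap E\neq\emptyset\) you need \(E\cap\alpha\) to be club in \(\alpha\), so you should pick \(\alpha\) with \(\cf\alpha=\omega_1\) from the club \(\Lim(E)\) rather than from \(E\) itself.
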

\begin{proof}
By \(\square_{\omega_1}\), we can fix a sequence
\(\seq{C_\alpha \colon \alpha < \omega_2, \text{\(\alpha\) a limit ordinal}}\)
such that for all limit \(\alpha < \omega_2\):
\begin{enumerate}
\item
\(C_\alpha\) is closed unbounded in \(\alpha\);
\item
\(\cf \alpha = \omega_1\) implies \(\ord(C_\alpha) = \omega_1\); and
\item
\(\beta \in \Lim(C_\alpha)\) implies \(C_\beta = C_\alpha \cap \beta\).
\end{enumerate}
Fix disjoint stationary subsets \(A_0, B_0\) of \(\omega_1\)
consisting of limit ordinals.
For each \(\mu < \omega_2\) with \(\cf \mu = \omega_1\),
the set \(C_\mu\) is closed unbounded in \(\mu\) and
of order type \(\omega_1\), so we can
form ``isomorphic copies of \(A_0\) and \(B_0\) within \(C_\mu\)''
(under the unique order-isomorphism
between \(\omega_1\) and \(C_\mu\)) by defining:
\[
A_\mu := \{ \xi \in C_\mu \colon \ord(C_\mu \cap \xi) \in A_0 \}
\;\text{ and }\;
B_\mu := \{ \xi \in C_\mu \colon \ord(C_\mu \cap \xi) \in B_0 \}.
\]
Note that \(A_\mu \subseteq \Lim C_\mu\) and \(B_\mu \subseteq \Lim C_\mu\).
Finally, ``put them all together'' by defining:
\[
A := \bigcup_{\substack{\mu < \omega_2 \\ \cf \mu = \omega_1}} A_\mu
\qquad\text{ and }\qquad
B := \bigcup_{\substack{\mu < \omega_2 \\ \cf \mu = \omega_1}} B_\mu.
\]
Then \(A\) and \(B\) are disjoint, since if \(\xi \in A_\mu \cap B_\nu\)
then \(\xi \in \Lim C_\mu \cap \Lim C_\nu\), so
\(C_\mu \cap \xi = C_\xi = C_\nu \cap \xi\),
so \(\ord(C_\mu \cap \xi) = \ord(C_\nu \cap \xi)\), which is a
contradiction since \(\ord(C_\mu \cap \xi) \in A_0\)
and \(\ord(C_\nu \cap \xi) \in B_0\), while \(A_0\) and \(B_0\) are disjoint.

Now if \(\mu < \omega_2\) and \(\cf \mu = \omega_1\), then
\(A_\mu\) and \(B_\mu\) are stationary in \(C_\mu\) and hence in \(\mu\),
so \(A\) and \(B\) reflect at \(\mu\).  Also,
\(A\) and \(B\) reflect at \(\mu = \omega_2\) as well, since
they
are stationary in \(\omega_2\).
So \(A\) and \(B\) are disjoint sets which reflect everywhere on \(\omega_2\).
\end{proof}
Combining the above proposition with Theorem~\ref{theo:scattered} we get:
\begin{corollary}\label{coro:myfinal}
Assuming \(\square_{\omega_1}\), every countably compact scattered
Hausdorff space of CB-rank \(\omega_2\) is St\"ackel-compact.
\end{corollary}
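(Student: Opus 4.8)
The plan is to recognize this corollary as a direct application of Theorem~\ref{theo:scattered}, whose hypothesis on reflecting pairs is exactly what Proposition~\ref{prop:omegatwo} supplies in the case \(\rho = \omega_2\). So I would first let \(X\) be an arbitrary countably compact scattered Hausdorff space whose CB-rank \(\rho = \rho(X)\) equals \(\omega_2\), and observe that Theorem~\ref{theo:scattered} reduces the entire task to producing two disjoint subsets of \(\rho = \omega_2\) that reflect everywhere on \(\omega_2\). No feature of \(X\) beyond countable compactness, scatteredness, \(T_2\), and the value of its CB-rank is needed, so the space itself plays no further role once the reflecting pair is in hand.

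The key step is then to invoke Proposition~\ref{prop:omegatwo}: under \(\square_{\omega_1}\) there exists precisely such a disjoint pair \(A, B \subseteq \omega_2\) reflecting everywhere on \(\omega_2\). Feeding \(A\) and \(B\) into Theorem~\ref{theo:scattered} immediately yields that \(X\) is St\"ackel-compact, which completes the argument. The role of \(\square_{\omega_1}\) here is exactly to furnish the reflecting pair at CB-rank \(\omega_2\) — the one ingredient that Theorem~\ref{theo:lessthanomega2} establishes in ZFC alone only for \(\rho < \omega_2\), and which does not extend to \(\rho = \omega_2\) without additional axioms.

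I expect no real obstacle at the level of the corollary itself, which is a one-line assembly of two previously established facts. The substantive work has already been discharged inside Proposition~\ref{prop:omegatwo}, where the coherence clause \(\beta \in \Lim(C_\alpha) \implies C_\beta = C_\alpha \cap \beta\) of the \(\square_{\omega_1}\)-sequence is used to transport fixed disjoint stationary sets \(A_0, B_0 \subseteq \omega_1\) into order-isomorphic copies inside each \(C_\mu\) (for \(\cf \mu = \omega_1\)) and, crucially, to guarantee that these copies remain pairwise disjoint across distinct \(\mu\). That coherence argument — not anything in the corollary — is where the set-theoretic strength beyond ZFC is genuinely consumed, so the hard part is upstream and the corollary follows formally.
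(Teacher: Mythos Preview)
Your proposal is correct and matches the paper's approach exactly: the corollary is obtained simply by combining Proposition~\ref{prop:omegatwo} (which, under \(\square_{\omega_1}\), supplies the disjoint pair reflecting everywhere on \(\omega_2\)) with Theorem~\ref{theo:scattered}. Your additional commentary on where the real work lies is accurate but goes beyond what the paper records for this one-line corollary.
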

Hamkins (MathOverflow post~\cite{mathoverflow}, enclosed below)
shows that if the global square principle is assumed, then
for every \(\rho \geq \omega_2\) there are disjoint
stationary sets which reflect everywhere on \(\rho\)
(in fact, there exist
disjoint proper classes \(A, B\) of ordinals such that \(A \cap \alpha\)
and \(B \cap \alpha\) are stationary in \(\alpha\) for every ordinal
\(\alpha\) with uncountable cofinality).
Since the global square principle holds under ZFC\(+\)V=L, we can
combine the result of Hamkins with Theorem~\ref{theo:scattered}
to get the following conclusion.
\begin{theorem}\label{theo:veql}
Assume ZFC\(+\)V=L, and let \(X\) be a scattered Hausdorff space.
Then \(X\) is St\"ackel-compact if and only if it is
countably compact.  Hence it is relatively consistent with ZFC that 
all scattered countably compact Hausdorff spaces are St\"ackel-compact.
\end{theorem}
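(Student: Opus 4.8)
The plan is to establish only the converse direction, since the implication ``St\"ackel-compact $\implies$ countably compact'' was already proved for \emph{all} Hausdorff spaces in Section~\ref{sec:propstack} and requires no hypothesis beyond ZFC. So I assume V=L, let $X$ be a countably compact scattered Hausdorff space with CB-rank $\rho = \rho(X)$, and aim to show that $X$ is St\"ackel-compact. The whole argument is an assembly of Theorem~\ref{theo:scattered} with the cited result of Hamkins, so the body of the proof is short.

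First I would reduce the goal entirely to Theorem~\ref{theo:scattered}: by that theorem, it suffices to produce a pair of \emph{disjoint sets} $A, B$, each of which reflects everywhere on $\rho$. For $\rho < \omega_2$ such sets already exist in ZFC by Theorem~\ref{theo:lessthanomega2} (this is precisely the content of Theorem~\ref{theo:main}), so the only genuinely new case is $\rho \geq \omega_2$, where I bring in the extra axiom.

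Next I would use the fact that ZFC$+$V=L implies the global square principle, and invoke Hamkins's result (reproduced from \cite{mathoverflow}): under global square there are disjoint proper classes $A, B$ of ordinals such that $A \cap \alpha$ and $B \cap \alpha$ are stationary in $\alpha$ for every ordinal $\alpha$ with $\cf \alpha > \omega$. I would then restrict to the initial segment $\rho$, taking the (genuine, disjoint) sets $A \cap \rho$ and $B \cap \rho$. For every $\beta \leq \rho$ with $\cf \beta > \omega$ we have $(A \cap \rho) \cap \beta = A \cap \beta$, which is stationary in $\beta$ by Hamkins's conclusion; this is exactly the assertion that $A \cap \rho$ reflects everywhere on $\rho$, and symmetrically for $B \cap \rho$. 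Feeding this pair into Theorem~\ref{theo:scattered} gives that $X$ is St\"ackel-compact. The stated biconditional for all scattered Hausdorff $X$ then follows, and the concluding ``relatively consistent'' clause is immediate from G\"odel's theorem that V=L is consistent relative to ZFC.

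The main obstacle is not in this assembly, which is merely an unwinding of the definition of ``reflects everywhere on $\rho$'' together with the elementary observation that adding or discarding ordinals outside of $\beta$ does not affect stationarity in $\beta$. The real substance lies in the cited Hamkins construction of the disjoint reflecting classes from global square, which this paper treats as an external input; the only delicate point on our side is to check that his conclusion, phrased uniformly for all ordinals of uncountable cofinality, specializes correctly to the bounded reflection property that Theorem~\ref{theo:scattered} requires, and that the restricted classes remain disjoint and genuine sets.
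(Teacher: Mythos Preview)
Your proposal is correct and follows essentially the same approach as the paper: the paper likewise derives the theorem by noting that V=L implies global square, invoking Hamkins's result from~\cite{mathoverflow} to obtain disjoint classes that reflect at every ordinal of uncountable cofinality, and then feeding this into Theorem~\ref{theo:scattered}. Your case split at $\rho < \omega_2$ versus $\rho \geq \omega_2$ is harmless but unnecessary, since Hamkins's classes already reflect everywhere on any $\rho$, so the paper simply applies them uniformly without invoking Theorem~\ref{theo:lessthanomega2} separately.
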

As mentioned earlier, 
while Theorem~\ref{theo:veql}
is restricted to scattered spaces, Mr\'owka spaces (see~\cite{Engelking})
provide examples
of scattered spaces which distinguish different notions of compactness.
So this limits the types of spaces in which a non-St\"ackel-compact
countably compact space may be found under ZFC, further raising
the possibility that
Theorem~\ref{theo:veql} may hold in ZFC (without
assuming V=L).  I thank the referee for this observation.

At the suggestion of the referee, we end this section by
enclosing the relevant parts
from~\cite{mathoverflow} here for completeness.
\begin{xdefinition}[see \cite{Schimmerling}, Definition~19]
The global square principle \(\square\)
is the assertion that there is an assignment \(\nu\mapsto C_\nu\)
for all singular ordinals \(\nu\), such that
\begin{itemize}
\item
\(C_\nu\) is a closed subset of \(\nu\), containing only
        singular ordinals;
\item
if \(\nu\) has uncountable cofinality, then \(C_\nu\) is
unbounded in \(\nu\);
\item
the order type of \(C_\nu\) is less than \(\nu\);
\item
and if \(\mu\in C_\nu\), then \(C_\mu = C_\nu \cap \mu\).
\end{itemize}
\end{xdefinition}
\begin{xtheorem}[Hamkins~\cite{mathoverflow}]\label{prop:hamkins}
Under the global square principle \(\square\),
there is a global partition of the class of
singular ordinals into \(A \sqcup B\), such that
for every \(\kappa\) of uncountable cofinality, both
\(A \cap \kappa\) and \(B \cap \kappa\) are stationary in \(\kappa\).
\end{xtheorem}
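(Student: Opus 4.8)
The plan is to lift the proof of Proposition~\ref{prop:omegatwo} from the single cardinal \(\omega_1\) (with top ordinal \(\omega_2\)) to the whole class of ordinals, using the coherence clause ``\(\mu \in C_\nu \implies C_\mu = C_\nu \cap \mu\)'' of the global square as the engine, exactly as the \(\square_{\omega_1}\)-coherence was used there. First I would isolate the canonical index attached to each singular ordinal by the square, namely \(o(\nu) := \ord(C_\nu)\); by coherence, whenever \(\mu \in \Lim(C_\nu)\) we have \(C_\mu = C_\nu \cap \mu\), so \(o(\mu) = \ord(C_\nu \cap \mu)\) is precisely the position of \(\mu\) in the increasing enumeration \(e_\nu \colon o(\nu) \to C_\nu\), and this value depends only on \(\mu\), not on the particular \(\nu\) through which we view it. This is the key self-consistency that makes any partition defined through the \(e_\nu\)'s automatically coherent. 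For the seed of the construction I would fix, for each regular uncountable cardinal \(\lambda\), a partition of \(\lambda\) into two disjoint stationary sets of limit ordinals \(S^\lambda_0 \sqcup S^\lambda_1\); this is the only genuinely ZFC ingredient (Solovay/Ulam), and it replaces the single fixed pair \(A_0, B_0 \subseteq \omega_1\) of Proposition~\ref{prop:omegatwo}.

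I would then define the global partition \(A \sqcup B\) of the singular ordinals by recursion on the order type. For \(\nu\) of uncountable cofinality, \(C_\nu\) is unbounded of order type \(\delta := o(\nu) < \nu\) with \(\cf \delta = \cf \nu\); the recursion, having already produced a coherent stationary/costationary subset \(T_\delta \subseteq \delta\) for this strictly smaller ordinal \(\delta\) (with the seed \(S^\lambda_i\) used when the order type is a regular cardinal \(\lambda\), in particular at the base case \(\lambda = \omega_1\)), assigns the colour of \(\nu\) by pulling \(T_\delta\) back through the continuous enumeration \(e_\nu\). Singular ordinals of countable cofinality may be coloured arbitrarily, since reflection is only demanded at uncountable cofinalities.

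Verification splits into two parts. For disjointness I would copy the argument of Proposition~\ref{prop:omegatwo} verbatim: if some \(\xi\) were placed in both \(A\) and \(B\) through two clubs \(C_\mu\) and \(C_\nu\), then \(\xi \in \Lim(C_\mu) \cap \Lim(C_\nu)\), whence coherence gives \(C_\mu \cap \xi = C_\xi = C_\nu \cap \xi\), so \(\ord(C_\mu \cap \xi) = \ord(C_\nu \cap \xi)\), contradicting \(S^\lambda_0 \cap S^\lambda_1 = \emptyset\). For reflection I would argue by induction on \(\kappa\) of uncountable cofinality. When \(\kappa\) is a regular cardinal the seed \(S^\kappa_0, S^\kappa_1\) delivers stationarity of \(A \cap \kappa\) and \(B \cap \kappa\) directly. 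When \(\kappa\) is singular, the inductive hypothesis supplies a stationary/costationary \(T_\delta\) at \(\delta = \ord(C_\kappa) < \kappa\), and the continuous increasing cofinal map \(e_\kappa\) transfers stationarity upward: images and preimages of clubs under \(e_\kappa\) are clubs, so a set stationary in \(\delta\) maps to a set stationary in \(\kappa\), giving both \(A \cap \kappa\) and \(B \cap \kappa\) stationary in \(\kappa\).

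The hard part will be the coherence bookkeeping in the recursion, and this is exactly where Proposition~\ref{prop:omegatwo} was easier: there \emph{every} relevant club had the single order type \(\omega_1\), so one fixed stationary pair sufficed, whereas here the order types \(\ord(C_\kappa)\) range over all ordinals of the given cofinality and the auxiliary sets \(T_\delta\) must be chosen coherently for all \(\delta\) simultaneously. I would organise this as a well-founded recursion driven by the strict drop \(o(\nu) = \ord(C_\nu) < \nu\), and the delicate point to check is that the square's coherence forces the pulled-back sets to agree on overlaps (so that \(A\) and \(B\) are genuinely well defined and the inductive step closes). One must also confirm that self-similarity alone cannot produce the seed---at \(\omega_1\) there are no smaller uncountable regular cardinals to inherit from, so a true Solovay splitting is indispensable---and that stationarity really survives each continuous enumeration \(e_\kappa\); these are the places I expect to spend the most care.
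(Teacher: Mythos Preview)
Your instinct to run a recursion on \(o(\nu) := \ord(C_\nu) < \nu\) and to rely on the coherence clause is exactly right, and the observation that \(o(\mu)\) is independent of which \(C_\nu\) one reads it through is the heart of the matter. But the proposal is overbuilt in one place and underspecified in another, and the two problems are linked.

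The overbuilding is the family of seeds \(S^\lambda_0, S^\lambda_1\) at every regular uncountable \(\lambda\). As you yourself worry at the end, these seeds are chosen independently and there is no mechanism forcing \(S^{\lambda}_0 \cap \gamma\) to agree with \(S^{\lambda'}_0 \cap \gamma\); so when you say the colour is assigned ``by pulling \(T_\delta\) back through \(e_\nu\)'', a single singular \(\mu\) sitting in two clubs \(C_\nu\), \(C_{\nu'}\) with different order types \(\delta, \delta'\) can receive conflicting colours. The disjointness argument you sketch, copied from Proposition~\ref{prop:omegatwo}, only shows \(\ord(C_\nu \cap \mu) = \ord(C_{\nu'} \cap \mu)\); it does \emph{not} show that \(T_\delta\) and \(T_{\delta'}\) agree at that common value, and with independent Solovay splittings they generally will not. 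Relatedly, the sentence ``when \(\kappa\) is a regular cardinal the seed \(S^\kappa_0, S^\kappa_1\) delivers stationarity of \(A \cap \kappa\) directly'' cannot be right: the ordinals below \(\kappa\) have already been coloured by the recursion, and there is no reason that colouring should coincide with the fresh seed \(S^\kappa_0\).

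The paper (following Hamkins) avoids all of this by seeding \emph{only} below \(\omega_1\) and then using the single recursive rule
\[
\nu \in A \iff \ord(C_\nu) \in A
\]
for every singular \(\nu \geq \omega_1\). This is automatically a partition, so no disjointness check is needed, and coherence immediately gives that \(e_\kappa\) carries \(A \cap \beta\), \(B \cap \beta\) onto \(A \cap C_\kappa\), \(B \cap C_\kappa\) when \(\beta = \ord(C_\kappa)\). The case you were trying to handle with the extra seeds, namely \(\cf\kappa > \omega_1\), is dispatched instead by a short reflection step: any club \(C \subseteq \kappa\) contains a singular \(\eta\) with \(\cf\eta = \omega_1\), and then \(C \cap C_\eta\) is club in \(\eta\), so by induction it meets both \(A\) and \(B\). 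Once you see this reduction to cofinality \(\omega_1\), the higher seeds and all the attendant ``coherence bookkeeping'' simply disappear.
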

\begin{proof}[Proof (Hamkins, reproduced from~\cite{mathoverflow})]
Fix the \(\square\) sequence \(C_\nu\). First, define
    \(A\) and \(B\) up to \(\omega_1\) to be any partition of the
    singular countable ordinals into stationary sets.  Suppose now
    that \(A\) and \(B\) are defined up to \(\nu\), a singular limit
    ordinal.  Consider \(C_\nu\), which has some order type
    \(\eta < \nu\). If \(\eta \in A\), then put \(\nu \in A\),
    otherwise, put \(\nu \in B\). Continue by transfinite recursion.
    Note that \(A\) and \(B\) partition the singular ordinals.

Suppose that \(\kappa\) has uncountable cofinality. If
    \(\kappa = \omega_1\), then \(A \cap \kappa\) and \(B \cap \kappa\)
    are the stationary sets that we used to start the construction.
    More generally, if \(\kappa > \omega_1\) but has cofinality
    \(\omega_1\), then \(\kappa\) is singular and so \(C_\kappa\)
    is a club of some type \(\beta < \kappa\). Further,
    \(A\) and \(B\) when restricted to \(C_\kappa\) are copies of
    \(A \cap \beta\) and \(B \cap \beta\), which by induction
    are each stationary.  So \(A \cap \kappa\) and \(B \cap \kappa\)
    are stationary.  Finally, we have the case that \(\kappa\) has
    cofinality larger than \(\omega_1\).  Fix any club
    \(C \subset \kappa\). Thus, there is some singular
    \(\eta \in C\) with uncountable cofinality. So
    \(C_\eta \cap C\) is club in \(\eta\) and thus meets both
    \(A\) and \(B\), and so \(C\) meets both \(A\) and \(B\),
    as desired.
\end{proof}

\section{St\"ackel-compactness and Nov\'ak Spaces}%
\label{sec:open}
I am greatly indebted to the referee for the results of this section,
as they were obtained after the referee
suggested
that Nov\'ak spaces may resolve the question of productivity
of St\"ackel-compact spaces,
a question that
was left open in the original
version of the article.
This was indeed the case and led readily to a negative answer
to the question (Corollary~\ref{coro:negprod}).
Moreover, Nov\'ak's
method produces
examples of non-compact St\"ackel-compact spaces
without using the theory of stationary sets
(Proposition~\ref{prop:novakstackel});
all our earlier such examples needed stationary sets.

\smallskip

Let \(\beta(\NN)\) denote the Stone-\v{C}ech compactification of the
discrete space \(\NN\) of positive integers.
For any set \(E\), let \(|E|\) denote its cardinality.

\begin{definition}\label{def:novak}
We will call
a subspace \(X\) of \(\beta(\NN)\) a
\emph{basic Nov\'ak space} if
\(X = \bigcup_{\xi < \omega_1} X_\xi\)
for some \(\omega_1\)-sequence \(\seq{X_\xi}_{\xi < \omega_1}\)
such that 
for all \(\xi < \omega_1\),
\(\aleph_0 \leq |X_\xi| \leq 2^{\aleph_0}\), \(X_\xi\) is disjoint from
\(\bigcup_{\mu < \xi} X_\mu\),
and every infinite subset of
\(\bigcup_{\mu < \xi} X_\mu\) 
has a limit point in \(X_\xi\).
\end{definition}
\begin{corollary}[Immediate from Definition~\ref{def:novak}]%
\label{coro:novaksubs}
Let \(X = \bigcup_{\xi < \omega_1} X_\xi\)
be a basic Nov\'ak space with  \(\seq{X_\xi}_{\xi < \omega_1}\)
as in Definition~\ref{def:novak}.

\vspace{0.5ex}

\textup{(a)}~For any infinite
\(S \subseteq X\) there is \(\xi < \omega_1\) such that \(S\)
has limit points in \(X_\nu\) for all \(\nu \geq \xi\).

\textup{(b)}~If \(I \subseteq \omega_1\) is uncountable, then
\(\bigcup_{\xi \in I} X_\xi\) is also a basic Nov\'ak space.
\end{corollary}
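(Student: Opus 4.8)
The plan is to derive both statements directly from the three defining conditions of a basic Nov\'ak space, with the uncountable cofinality (regularity) of \(\omega_1\) doing the essential work in each part. Since the corollary is flagged as ``immediate from the definition,'' the proof should be pure bookkeeping, and the only points worth stating carefully are where regularity of \(\omega_1\) enters.

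For part (a), I would first reduce to a countable witness. As \(S\) is infinite, fix a countably infinite \(S_0 \subseteq S\). Because the pieces \(X_\xi\) are pairwise disjoint and \(S_0\) is countable, the index set \(\{\eta < \omega_1 \colon S_0 \cap X_\eta \neq \emptyset\}\) is countable, hence bounded in \(\omega_1\); let \(\xi < \omega_1\) be a strict upper bound, so \(S_0 \subseteq \bigcup_{\mu < \xi} X_\mu\). Now fix any \(\nu\) with \(\xi \leq \nu < \omega_1\). Then \(S_0\) is an infinite subset of \(\bigcup_{\mu < \xi} X_\mu \subseteq \bigcup_{\mu < \nu} X_\mu\), so the third defining condition applied at level \(\nu\) yields a limit point of \(S_0\) in \(X_\nu\). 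Since \(S_0 \subseteq S\), every limit point of \(S_0\) is a limit point of \(S\), which gives the claim for all \(\nu \geq \xi\) at once.

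For part (b), let \(I \subseteq \omega_1\) be uncountable. The key structural fact is that every uncountable subset of \(\omega_1\) has order type exactly \(\omega_1\), so I can take the increasing enumeration \(I = \{\xi_\alpha \colon \alpha < \omega_1\}\) and set \(X'_\alpha := X_{\xi_\alpha}\), giving \(\bigcup_{\xi \in I} X_\xi = \bigcup_{\alpha < \omega_1} X'_\alpha\). It then remains to verify the three conditions for \(\seq{X'_\alpha}_{\alpha < \omega_1}\). The cardinality bound is inherited verbatim. For disjointness and the limit-point condition, the point is that \(\bigcup_{\beta < \alpha} X'_\beta = \bigcup_{\beta < \alpha} X_{\xi_\beta} \subseteq \bigcup_{\mu < \xi_\alpha} X_\mu\), since \(\xi_\beta < \xi_\alpha\) whenever \(\beta < \alpha\); applying the original disjointness of \(X_{\xi_\alpha}\) from \(\bigcup_{\mu < \xi_\alpha} X_\mu\) (respectively the original limit-point condition at level \(\xi_\alpha\)) to this smaller union yields exactly what is required. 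Being a subset of \(X \subseteq \beta(\NN)\), the union is again a subspace of \(\beta(\NN)\), so it is a basic Nov\'ak space.

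Neither part is a genuine obstacle, but the step to phrase most carefully is the use of regularity of \(\omega_1\): in part (a) to bound the countable index set of \(S_0\), and in part (b) to guarantee that the re-indexed sequence still has length \(\omega_1\). The one mild subtlety in (a) is that a single countable \(S_0\) serves simultaneously as a witness at every level \(\nu \geq \xi\), which is precisely what lets the conclusion hold for all such \(\nu\) rather than for a single one.
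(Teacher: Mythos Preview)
Your proof is correct and is exactly the natural expansion of what the paper leaves implicit: the paper gives no proof at all beyond the parenthetical ``Immediate from Definition~\ref{def:novak}'', and your argument is precisely the bookkeeping one would write out, with the regularity of \(\omega_1\) used in the expected places. There is nothing to compare or correct.
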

\begin{proposition}[Nov\'ak]\label{prop:novak}
If \(A \subseteq \beta(\NN)\)
and \(\aleph_0 \leq |A| \leq 2^{\aleph_0}\),
there is a basic Nov\'ak space \(X = \bigcup_{\xi < \omega_1} X_\xi\)
with \(X_0 = A\) and \(\seq{X_\xi}_{\xi < \omega_1}\) as in
Definition~\ref{def:novak}.
There are basic Nov\'ak spaces \(Y, Z\) such that
\(Y \!\times\! Z\) is not countably compact.
\end{proposition}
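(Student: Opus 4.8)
The plan is to carry out both assertions as transfinite recursions of length $\omega_1$ inside $\beta(\NN)$, resting on one standard fact: every infinite closed subset of $\beta(\NN)$ has cardinality $2^{2^{\aleph_0}}$. From this, for any infinite $S \subseteq \beta(\NN)$ the closure $\overline S$ is infinite and closed, so $|\overline S| = 2^{2^{\aleph_0}}$; and since $|S| \le 2^{\aleph_0} < 2^{2^{\aleph_0}}$, the set $\overline S \setminus S$ of limit points of $S$ still has cardinality $2^{2^{\aleph_0}}$. In particular $S$ has a limit point lying outside any prescribed set of size at most $2^{\aleph_0}$, and this is the only freedom the constructions will need.

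For the first assertion I would build $\seq{X_\xi}_{\xi < \omega_1}$ by recursion with $X_0 = A$. At stage $\xi$ put $W := \bigcup_{\mu < \xi} X_\mu$; as $\xi$ is countable and each earlier piece has size at most $2^{\aleph_0}$, we get $|W| \le 2^{\aleph_0}$, so $W$ has at most $(2^{\aleph_0})^{\aleph_0} = 2^{\aleph_0}$ countably infinite subsets. For each such subset $S$ the fact above supplies a limit point $p_S \in \beta(\NN) \setminus W$; I would let $X_\xi$ be the set of these $p_S$ together with countably many further new points when needed, arranged so that $X_\xi \cap W = \emptyset$ and $\aleph_0 \le |X_\xi| \le 2^{\aleph_0}$. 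Any infinite $T \subseteq W$ contains a countably infinite $S$, and then $p_S \in X_\xi$ is a limit point of $T$ as well; hence $X := \bigcup_{\xi < \omega_1} X_\xi$ satisfies Definition~\ref{def:novak} with $X_0 = A$.

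For the second assertion I would run two such recursions at once, producing $Y = \bigcup_{\xi<\omega_1} Y_\xi$ and $Z = \bigcup_{\xi<\omega_1} Z_\xi$ with $Y_0 = Z_0 = \NN$. The sole extra demand is to keep the non-integer points of the two spaces disjoint. At stage $\xi$, writing $P := \bigcup_{\mu<\xi}(Y_\mu \cup Z_\mu)$ (still of size at most $2^{\aleph_0}$), I would first choose the limit points making up $Y_\xi$ from $\beta(\NN) \setminus P$, and then choose those making up $Z_\xi$ from $\beta(\NN) \setminus (P \cup Y_\xi)$; each choice is possible because the forbidden set always has size at most $2^{\aleph_0}$, far below the $2^{2^{\aleph_0}}$ available limit points. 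Every chosen limit point lies in $\beta(\NN) \setminus \NN$ (integers are isolated), and the two recursions never share a point, so the resulting basic Nov\'ak spaces satisfy $Y \cap Z = \NN$.

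It remains to see that $Y \times Z$ is not countably compact, which I view as the conceptual core. Consider the infinite set $D := \{(n,n) \colon n \in \NN\} \subseteq Y \times Z$. Identifying points of $\beta(\NN)$ with ultrafilters on $\NN$ and computing in $\beta(\NN) \times \beta(\NN)$, a pair $(p,q)$ lies in the closure of $D$ exactly when $A \cap B \neq \emptyset$ for all $A \in p$ and $B \in q$, i.e.\ precisely when $p = q$; since each $(n,n)$ is isolated, the limit points of $D$ are exactly the pairs $(p,p)$ with $p \in \beta(\NN) \setminus \NN$. Intersecting with $Y \times Z$, a limit point of $D$ would force $p \in (Y \cap Z) \setminus \NN = \emptyset$, so $D$ is an infinite subset of $Y \times Z$ with no limit point and $Y \times Z$ is not countably compact. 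I expect the main obstacle to be not the diagonal computation but the simultaneous recursion of the third paragraph: one must maintain $Y \cap Z = \NN$ — keeping the freshly added limit points of $Y$ and $Z$ apart — while still meeting each space's limit-point requirement at every one of the $\omega_1$ stages, and this works only because the $\beta(\NN)$ fact of the first paragraph keeps $2^{2^{\aleph_0}}$ limit points available against a forbidden set of size at most $2^{\aleph_0}$.
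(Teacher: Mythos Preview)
Your argument is correct. The first assertion is handled just as in the paper: a transfinite recursion of length $\omega_1$ driven by the cardinality gap between $2^{\aleph_0}$ (bounding the number of countably infinite subsets of the partial union) and $2^{2^{\aleph_0}}$ (the supply of limit points of any infinite set in $\beta(\NN)$).

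For the second assertion your route is genuinely different. You run two recursions in parallel, carefully interleaving the choices so that the non-integer parts of $Y$ and $Z$ stay disjoint; you correctly identify this bookkeeping as the main obstacle. The paper avoids that obstacle entirely: it builds a \emph{single} basic Nov\'ak space $X = \bigcup_{\xi<\omega_1} X_\xi$ with $X_0 = \NN$, then fixes uncountable disjoint $I, J \subseteq \omega_1 \setminus \{0\}$ and sets $Y := \bigcup_{\xi \in I \cup \{0\}} X_\xi$ and $Z := \bigcup_{\xi \in J \cup \{0\}} X_\xi$. By Corollary~\ref{coro:novaksubs}(b) these are again basic Nov\'ak spaces, and $Y \cap Z = X_0 = \NN$ is immediate from the pairwise disjointness of the $X_\xi$. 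So the simultaneous recursion you worried about is unnecessary: one recursion followed by an index-splitting suffices. Your approach buys self-containment (you never need Corollary~\ref{coro:novaksubs}(b)); the paper's buys simplicity and shows that the two witnessing spaces can be taken as subspaces of a single Nov\'ak space. The closing diagonal computation---that the only limit points of $\{(n,n):n\in\NN\}$ in $\beta(\NN)\times\beta(\NN)$ are the pairs $(p,p)$ with $p \notin \NN$---is the same in both.
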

\begin{proof}[Proof
(outline, from Nov\'ak's construction~\cite{Novak})]
To get \(X\),
take \(X_0 = A\) and
define via transfinite recursion the sets \(X_\xi\),
which can be chosen to meet
the conditions of Definition~\ref{def:novak}
because
any infinite \(S \subseteq \beta(\NN)\) has \(2^{2^{\aleph_0}}\)
limit points, and if \(|S| \leq 2^{\aleph_0}\) then
\(|\{ E \subseteq S \mid |E| \leq \aleph_0\}| \leq 2^{\aleph_0}\).
Now take such a basic Nov\'ak space \(X\) with \(X_0 = \NN\),
fix uncountable disjoint \(I, J \subseteq \omega_1 \smallsetminus \{0\}\),
and let \(Y := \bigcup_{\xi \in I \cup \{0\}} X_\xi\) and
\(Z := \bigcup_{\xi \in J \cup \{0\}} X_\xi\).
This gives basic Nov\'ak spaces
\(Y\) and \(Z\)
with \(Y \cap Z = \NN\).
Then \(Y \times Z\) is not countably compact,
since it has an infinite discrete closed subset
\(\{(n,n)\mid n\in \NN\}\).
\end{proof}
\begin{lemma}\label{lemma:disjointlimitsets}
Let \(X\) be a Hausdorff topological space
containing two disjoint sets \(A\) and \(B\)
such that every infinite subset of \(X\) has a limit point
in \(A\) and also a limit point in \(B\).
Then \(X\) is St\"ackel-compact.
\end{lemma}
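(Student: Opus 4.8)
The plan is to build a symmetric topological well order directly, in the spirit of the proof that $\omega_1$ is St\"ackel-compact (Theorem~\ref{theo:omegaone}), using $A$ as a ``bottom'' well-ordered block and $B$ as a ``top'' reverse-well-ordered block. The single fact that drives everything is the observation that \emph{every infinite closed subset $F$ of $X$ meets both $A$ and $B$}. Indeed, by hypothesis the infinite set $F$ has a limit point $p \in A$; since every limit point of $F$ lies in the closure $\overline{F}$ and $F$ is closed, we get $p \in \overline{F} = F$, so $p \in F \cap A$, and symmetrically $F \cap B \neq \emptyset$.

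Next I would fix the order. Put $C := X \setminus (A \cup B)$, choose a well-order $<_A$ of $A$, a well-order $<_B$ of $B$, and any well-order $<_C$ of $C$. Define $\prec$ by stacking the three blocks in the order $A$, then $C$, then $B$: declare every element of $A$ to $\prec$-precede every element of $C$, and every element of $C$ to $\prec$-precede every element of $B$; within $A$ let $\prec$ agree with $<_A$, within $C$ with $<_C$, and within $B$ let $\prec$ be the \emph{reverse} of $<_B$. This is a linear order on $X$. By construction every non-empty subset of $A$ has a $\prec$-least element (namely its $<_A$-least element), and every non-empty subset of $B$ has a $\prec$-greatest element (namely its $<_B$-least element).

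It then remains to check that each non-empty closed $F \subseteq X$ has a $\prec$-least and a $\prec$-greatest element. If $F$ is finite this is immediate, since any finite linearly ordered set has extrema. If $F$ is infinite, the observation gives $F \cap A \neq \emptyset$ and $F \cap B \neq \emptyset$. Since $A$ is the bottom block, the $\prec$-least element of $F$ is the $\prec$-least element of $F \cap A$, which exists by the previous paragraph; dually, since $B$ is the top block, the $\prec$-greatest element of $F$ is the $\prec$-greatest element of $F \cap B$. Hence $\prec$ is a symmetric topological well order and $X$ is St\"ackel-compact.

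The step carrying all the weight is the opening observation: it is what guarantees that no infinite closed set can be ``trapped'' inside the middle block $C$ without reaching the two extremal blocks, and it is precisely here that closedness is used to upgrade ``$F$ has a limit point in $A$ (resp.\ $B$)'' to ``$F$ meets $A$ (resp.\ $B$).'' Everything afterward is the same bookkeeping as in Theorem~\ref{theo:omegaone}, with $A$ and $B$ now playing the roles of the two stationary sets. I expect no genuine obstacle; the only points needing a little care are that $\prec$ is well-defined as a linear order and that the reversal imposed on $B$ indeed converts a $<_B$-least element into a $\prec$-greatest one.
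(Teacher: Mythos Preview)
Your proof is correct and follows essentially the same approach as the paper. The only difference is cosmetic: the paper uses just two blocks, well-ordering $A$ and reverse-well-ordering all of $X \setminus A$ (thus absorbing your middle block $C$ into the top block), whereas you keep $C$ separate; since every infinite closed set already meets both $A$ and $B$, the ordering on $C$ is irrelevant either way.
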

\begin{proof}
Fix a linear order on \(X\) in which
the set \(A\) wholly precedes the set \(X \smallsetminus A\),
the set \(A\) is well ordered, and the set
\(X \smallsetminus A\) is reverse well ordered.
\end{proof}
\begin{proposition}\label{prop:novakstackel}
Every basic Nov\'ak space \(X\) is St\"ackel-compact.
\end{proposition}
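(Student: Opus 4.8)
The plan is to reduce the statement to Lemma~\ref{lemma:disjointlimitsets} by producing two disjoint subsets \(A, B \subseteq X\) with the property that every infinite subset of \(X\) has a limit point in \(A\) and also a limit point in \(B\). The natural candidates arise from splitting the index set. Writing \(X = \bigcup_{\xi < \omega_1} X_\xi\) with \(\seq{X_\xi}_{\xi < \omega_1}\) as in Definition~\ref{def:novak}, I would partition \(\omega_1\) into two uncountable pieces \(I\) and \(J\), and set
\[
A := \bigcup_{\xi \in I} X_\xi
\qquad\text{and}\qquad
B := \bigcup_{\xi \in J} X_\xi.
\]
Since the \(X_\xi\) are pairwise disjoint and \(I \cap J = \emptyset\), the sets \(A\) and \(B\) are disjoint.

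Next I would verify the limit-point condition. Let \(S \subseteq X\) be infinite. By Corollary~\ref{coro:novaksubs}(a) there is \(\xi_0 < \omega_1\) such that \(S\) has a limit point in \(X_\nu\) for every \(\nu \geq \xi_0\). Because \(I\) and \(J\) are uncountable subsets of the regular cardinal \(\omega_1\), each is unbounded in \(\omega_1\), so I can choose \(\nu_1 \in I\) and \(\nu_2 \in J\) with \(\nu_1, \nu_2 \geq \xi_0\). Then \(S\) has a limit point in \(X_{\nu_1} \subseteq A\) and a limit point in \(X_{\nu_2} \subseteq B\). Hence every infinite subset of \(X\) has a limit point in \(A\) and a limit point in \(B\), and applying Lemma~\ref{lemma:disjointlimitsets} to these \(A\) and \(B\) gives that \(X\) is St\"ackel-compact.

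I do not expect any serious obstacle here: essentially all of the work is carried by Corollary~\ref{coro:novaksubs}(a), which guarantees that an infinite set eventually has limit points at \emph{every} sufficiently high index, combined with the elementary fact that an uncountable subset of \(\omega_1\) is unbounded. The only points needing care are the disjointness of \(A\) and \(B\) (immediate from the disjointness of the \(X_\xi\) and of \(I, J\)) and the selection of indices above \(\xi_0\) simultaneously in both \(I\) and \(J\). It is worth noting that this argument, unlike the earlier constructions for \(\omega_1\) and the products in Section~\ref{sec:ccompact}, uses no stationary-set machinery at all.
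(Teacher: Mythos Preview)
Your proof is correct and is essentially identical to the paper's own argument: the paper likewise fixes uncountable disjoint \(I, J \subseteq \omega_1\), sets \(A := \bigcup_{\xi \in I} X_\xi\) and \(B := \bigcup_{\xi \in J} X_\xi\), invokes Corollary~\ref{coro:novaksubs}(a), and applies Lemma~\ref{lemma:disjointlimitsets}. Your version actually spells out a bit more detail (the unboundedness of \(I\) and \(J\) and the explicit choice of \(\nu_1, \nu_2 \geq \xi_0\)) than the paper does.
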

\begin{proof}
Let \(X\) be a basic Nov\'ak space and express it as
\(X = \bigcup_{\xi < \omega_1} X_\xi\) with
\(\seq{X_\xi}_{\xi < \omega_1}\) as in Definition~\ref{def:novak}.
Fix uncountable disjoint \(I, J \subseteq \omega_1\), and let
\( A := \bigcup_{\xi \in I} X_{\xi}\) and
\( B := \bigcup_{\xi \in J} X_{\xi}\).
The sets \(A, B\) are disjoint, and
every infinite subset of \(X\) has limit points
in \(A\) and also in \(B\)
(Corollary~\ref{coro:novaksubs}(a)).
So \(X\) is St\"ackel-compact by
Lemma~\ref{lemma:disjointlimitsets}.
\end{proof}
\begin{corollary}\label{coro:negprod}
There are St\"ackel-compact spaces \(Y, Z\) such that
\(Y \times Z\) is not countably compact,
so St\"ackel-compactness is not a productive property.
\end{corollary}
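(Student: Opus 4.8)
The plan is to obtain the two spaces directly from the Nov\'ak machinery developed above, so that the corollary becomes a short synthesis of Proposition~\ref{prop:novak} and Proposition~\ref{prop:novakstackel}. First I would invoke Proposition~\ref{prop:novak} to produce a pair of basic Nov\'ak spaces $Y$ and $Z$ whose product $Y \times Z$ fails to be countably compact. Recall that in that construction one builds a single basic Nov\'ak space $X = \bigcup_{\xi < \omega_1} X_\xi$ with $X_0 = \NN$, splits the index set $\omega_1 \smallsetminus \{0\}$ into uncountable disjoint pieces $I, J$, and sets $Y = \bigcup_{\xi \in I \cup \{0\}} X_\xi$ and $Z = \bigcup_{\xi \in J \cup \{0\}} X_\xi$, so that $Y \cap Z = \NN$ and the diagonal $\{(n,n) : n \in \NN\}$ is an infinite discrete closed subset of $Y \times Z$, witnessing the failure of countable compactness.

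Next I would observe that $Y$ and $Z$ are themselves basic Nov\'ak spaces, hence St\"ackel-compact by Proposition~\ref{prop:novakstackel}. This already yields the required pair of St\"ackel-compact spaces whose product is not countably compact. Finally, since every St\"ackel-compact space is countably compact (established in Section~\ref{sec:propstack}), the space $Y \times Z$ cannot be St\"ackel-compact either; so St\"ackel-compactness is not preserved under (even finite) products, which is precisely the assertion that it is not a productive property.

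I do not anticipate any genuine obstacle here, since the substance has already been carried out in the two cited propositions: the entire difficulty of the corollary is absorbed into the Nov\'ak construction (Proposition~\ref{prop:novak}) and into the verification that basic Nov\'ak spaces are St\"ackel-compact (Proposition~\ref{prop:novakstackel}, itself resting on Lemma~\ref{lemma:disjointlimitsets}). The only point meriting a word of care is the logical step at the end: non-productivity is a statement about the failure of closure under products, and it is cleanest to deduce it from the \emph{stronger} fact that $Y \times Z$ fails the \emph{weaker} property of countable compactness, rather than attempting to argue directly that no symmetric topological well order exists on $Y \times Z$.
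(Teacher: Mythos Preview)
Your proposal is correct and matches the paper's approach exactly: the corollary is stated without proof in the paper precisely because it is an immediate synthesis of Proposition~\ref{prop:novak} and Proposition~\ref{prop:novakstackel}, with the final non-productivity clause following from the implication St\"ackel-compact $\Rightarrow$ countably compact established in Section~\ref{sec:propstack}. There is nothing to add or correct.
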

\begin{xremark}
A basic Nov\'ak space \(X = \bigcup_{\xi < \omega_1} X_\xi\)
is a subspace of \(\beta(\NN)\) of size \(2^{\aleph_0}\),
and gives
an example of St\"ackel-compact space that is neither compact
nor sequentially compact (unlike \(\omega_1\)).
\end{xremark}

\section{Open Questions, Credits, and History}%
\label{sec:finish}
\subsection*{Open Questions.}
We list some problems unanswered in this article.
\begin{enumerate}
\item\label{enumi:main}
If \(X\) is a countably compact Hausdorff space, is \(X\) St\"ackel-compact?
\emph{This is the most significant question we have left unsettled.}
\item
Can we answer~(\ref{enumi:main}) if we also assume that
\(X\) is a Tychonov space?
\item
Is every St\"ackel-compact space (\(T_2\) by definition) regular?
\item
Is the continuous image of a St\"ackel-compact space
in a Hausdorff space necessarily St\"ackel-compact?
\end{enumerate}

\subsection*{Acknowledgements and History}
The author initially
obtained the results up to Corollary~\ref{coro:myfinal}
and presented them in seminars in Detroit and Ann Arbor.
Ioannis Souldatos then posted
a MathOverflow question in~\cite{mathoverflow}.
The answers there by Joel Hamkins and Todd Eisworth showed that
the existence of disjoint sets which reflect everywhere is
both consistent with and (modulo large cardinals) independent of ZFC.
Finally, the author thanks the anonymous referee for many valuable
suggestions (mentioned at several places above), the most substantial
of which, on Nov\'ak spaces, resulted in new examples of
non-compact St\"ackel-compact spaces (Proposition~\ref{prop:novakstackel})
and the resolution of an open question in the original version of the article
(Corollary~\ref{coro:negprod}).

\normalsize

\end{document}